\pdfoutput=1
\documentclass[a4paper]{article}

\usepackage{geometry}
\usepackage{amsthm}
\usepackage[numbers,sort]{natbib}
\usepackage{amsmath,amssymb,amsfonts}
\usepackage{algorithm}
\usepackage{algpseudocode}
\usepackage{booktabs}
\usepackage{subcaption}
\usepackage{enumitem}
\usepackage{multirow}
\usepackage[european, straightvoltages, straightlabels]{circuitikz}
\usepackage{tikz}
\usetikzlibrary{decorations.pathmorphing, positioning}
\usepackage{pgfplots}
\usepackage{pgfplotstable}
\pgfplotsset{compat=1.18}
\usepackage{graphicx}
\usepackage{bbm}
\usepackage{xcolor}
\usepackage{hyperref}

\usepackage{cancel}

\theoremstyle{plain}
\newtheorem{theorem}{Theorem}[section]
\newtheorem{lemma}[theorem]{Lemma}
\newtheorem{proposition}[theorem]{Proposition}
\newtheorem{corollary}[theorem]{Corollary}
\theoremstyle{definition}
\newtheorem{definition}[theorem]{Definition}
\newtheorem{assumption}[theorem]{Assumption}
\theoremstyle{remark}
\newtheorem{remark}[theorem]{Remark}

\newcommand{\diff}{\mathrm{d}}

\newcommand{\R}{\mathbbm{R}}

\renewcommand{\H}{\mathcal{H}}
\renewcommand{\O}{\mathcal{O}}

\definecolor{darkgreen}{rgb}{0.1,0.7,0.1}
\graphicspath{{./figures/}}

\begin{document}
%------------------------------------------------------------------

\title{A Generalized Scalar Auxiliary Variable Method for Structure-Preserving and Efficient Integration of Nonlinear port-Hamiltonian DAEs\thanks{This work was funded by the Deutsche Forschungsgemeinschaft
    (DFG, German Research Foundation) -- Project-ID 531152215 -- CRC 1701.}}

\author{Aashutosh Sharma\thanks{Department of Applied and Computational Mathematics,
  University of Wuppertal, Gau{\ss}stra{\ss}e 20, 42119 Wuppertal, Germany.
  \texttt{\{asharma,bartel\}@uni-wuppertal.de}}
  \and
  Andreas Bartel\footnotemark[2]%
  \and
  Manuel Schaller\thanks{Faculty of Mathematics, Chemnitz University of Technology,
  Stra{\ss}e der Nationen 62, 09111 Chemnitz, Germany.
  \texttt{manuel.schaller@math.tu-chemnitz.de}}}

\date{}

\maketitle

\begin{abstract}
	
		We develop an energy-optimal generalized scalar auxiliary
		variable (EOP-GSAV) 
		framework for nonlinear index-one
		port-Hamiltonian differential-algebraic equations (pH-DAEs). 
		Exploiting the
		port-Hamiltonian structure, we separate the nonlinear effort, interconnection
		and dissipation terms from a constant implicit core.
		The resulting
		BDF-1 and BDF-2 schemes require one linear solve per time
		step with a reusable factorization, while retaining discrete passivity
		and accurate 
		tracking of the Hamiltonian. The schemes are
		compared with the implicit midpoint method equipped with full, modified,
		and frozen-Jacobian Newton iterations. 
		Numerical experiments ranging from a strongly state-dependent nonlinear
		stress test to large-scale benchmarks demonstrate robust and competitive
		performance, with substantial efficiency gains in matched-accuracy
		regimes. 
		A comparison with SUNDIALS IDA shows comparable work--precision behavior
        at equal order despite a non-specialized Python/SciPy implementation,
        while unrestricted variable-order adaptive IDA is faster in the
        high-accuracy regime.
\end{abstract}

\section{Introduction}

	Port-Hamiltonian systems~\cite{maschke1992intrinsic,duindam2009modeling,VanDerSchaftJeltsema2014}
	provide a systematic framework for dynamical systems in which energy
	storage, interconnection, dissipation, and external power exchange are
	represented explicitly.
	Applications include electrical engineering~\cite{bartel2022port}, multibody systems~\cite{berger2025port},
	fluid--structure interaction~\cite{rashad2021port}, electromagnetism~\cite{Clemens2024}, robotics~\cite{duong2024port}, and continuum
	mechanics~\cite{rashad2025port}. Constrained formulations lead naturally
	to port-Hamiltonian differential-algebraic equations (pH-DAEs)~\cite{BeattieMehrmannXu2018}, and structure-preserving spatial
	discretizations of distributed port-Hamiltonian systems 	likewise produce finite-dimensional port-Hamiltonian ODEs or DAEs~\cite{jacob2012linear,kinon2023port,brugnoli2021structure,
		ponce2025constrained}. This makes the pH-DAE setting a natural finite-dimensional target for time integration of structure-preserving semi-discretizations of distributed-parameter systems. 
	    We will consider a class of nonlinear pH-DAEs in which the nonlinearity enters through the effort map, the interconnection and the dissipation, see \eqref{eq:nonlinear-pHDAE} below.

	Time discretization of DAEs, e.g., by a BDF method, leads in general to a nonlinear algebraic system in every time step. For the pH-DAEs considered here, the central numerical objective is therefore to retain the energetic structure without rendering these nonlinear solves prohibitively expensive.
	
	Discrete-gradient methods achieve preservation of the energy balance through an
	exact discrete chain rule~\cite{Gonz96,mehrmann2019structure,schulze2023structure}.
	Recently, such an approach was developed for nonlinear
	pH-DAEs~\cite{kinon2025discreteDAE}.
	Still, the resulting nonlinear systems may require repeated residual and Jacobian
	evaluations, matrix assembly, and factorizations. For large-scale
	problems, this nonlinear algebra can dominate the cost of the time
	integration.

	Auxiliary-variable methods address the tension between unconditional energy
	stability and computational efficiency. The scalar auxiliary variable
	(SAV) approach~\cite{Shen2018} and its generalized form GSAV~\cite{HuangShen2022} introduce a scalar variable based on a shifted
	energy, allowing nonlinear contributions to be treated explicitly while
	retaining a linearly implicit discretization. Because the resulting
	auxiliary energy need not coincide with the true energy, relaxed and
	energy-optimal corrections have been introduced, including RSAV~\cite{jiang2022improving} and EOP-SAV~\cite{LiuZhangLi2024}. Recent
	connections between SAV and discrete-gradient formulations further show
	that auxiliary-variable ideas extend beyond their original
	gradient-flow setting~\cite{celledoni2026unified}.
	However, a GSAV algorithm for pH-DAEs of type~\eqref{eq:nonlinear-pHDAE}
	is currently not available.
	Such a formulation must incorporate the external port, respect the energy-induced geometry, accommodate degenerate dissipation, and remain compatible with the descriptor structure.

\paragraph{Contribution.}
In this work, we develop an energy-optimal GSAV (EOP-GSAV) framework for
nonlinear index-one pH-DAEs~\eqref{eq:nonlinear-pHDAE}. Its main advantage is
that all state dependence, be it in the effort map, in the interconnection or
in the dissipation, is gathered in a single term which is evaluated explicitly,
such that every time step requires one linear solve with the constant matrix
$\frac{\alpha_k}{\delta t}E+A$, to be introduced in
Section~\ref{sec:gsav-to-phdae}. This matrix is invertible for every
$\delta t>0$ due to the pH structure alone, Proposition~\ref{prop:solvability},
such that a single factorization serves the whole integration, while the order
of the underlying BDF method and the energetic structure are retained.
In particular, the algebraic work per step does not depend on the strength of
the nonlinearity, whereas methods based on nonlinear implicit solves require
repeated residual and Jacobian evaluations, matrix assembly, and
factorizations. We stress that the scheme is of fixed order and uses a fixed
step size, such that a variable-order adaptive solver may well be faster in the
high-accuracy regime. We will still observe in
Section~\ref{sec:ida-comparison} that it remains competitive with SUNDIALS IDA
at equal order, which motivates higher-order and adaptive EOP-GSAV extensions
as future work.

This paper features two main contributions, the first being the discretization
itself, Section~\ref{sec:gsav-to-phdae}: the scalar auxiliary variable is
driven by the open power balance \eqref{eq:modified-energy-law} rather than by
a closed-system decay law, and the closed-system EOP bound
of~\cite{LiuZhangLi2024} is replaced by a passivity budget which accounts for
the energy supplied through the external port. This yields the discrete
passivity inequality of Lemma~\ref{lem:auxiliary-passivity} and exact recovery
of the Hamiltonian whenever the budget is inactive,
Corollary~\ref{cor:exact-energy-tracking}. The second contribution is a unified
convergence analysis of BDF-1 and BDF-2 in the index-one descriptor setting,
Section~\ref{sec:gsav-phdae-analysis}, yielding order-$k$ convergence of the
state together with consistency and positivity of the auxiliary variable,
Theorem~\ref{thm:convergence}.

\paragraph{Outline.}
Section~\ref{sec:gsav-to-phdae} introduces the pH model class, the
constant-core splitting, and the BDF-$k$ EOP-GSAV discretization.
Section~\ref{sec:gsav-phdae-analysis} establishes solvability, convergence,
energy fidelity, and discrete auxiliary passivity. The numerical part follows
the same line. In Section~\ref{sec:toy-stress}, we first isolate the robustness
effect on a small, strongly state-dependent problem for which Jacobian reuse
becomes difficult, before we turn to a large-scale FPU-$\beta$/Maxwell
benchmark, Section~\ref{sec:time-comparison}, which is much more favorable to
Newton-type methods. Therein, we compare with the implicit midpoint method
equipped with full, modified, and frozen-Jacobian Newton iterations, and
finally with SUNDIALS IDA, Section~\ref{sec:ida-comparison}. The benchmark
construction and implementation details are collected in the appendices.

%-------------------------------------
\section{Adapting GSAV to pH-DAEs}
\label{sec:gsav-to-phdae}
In this part, we first describe the system class, recall the GSAV scheme and tailor it to port-Hamiltonian (pH) systems.
\subsection{Port-Hamiltonian setting and energy geometry} \label{sec:Q-geometry}

We consider the following class of pH systems:
\begin{definition}[Our pH-DAE class]\label{def:pH-class}
The descriptor variable $x:[0,T] \to \R^{n}$ satisfies
\begin{subequations}\label{eq:nonlinear-pHDAE}
\begin{align} \label{eq:nonlinear-pHDAE-dynamics}
\frac{\mathrm d(Ex)}{\mathrm dt} 
&=
\bigl(J(x)-R(x)\bigr)e(x)+Bu(t),
\\
y
&=
B^\top e(x),
\end{align}
\end{subequations}
with given effort map $e(x)$ and
\begin{enumerate}[leftmargin=*,itemsep=1pt,topsep=1pt]
    \item \label{def-item:structure}\emph{Structure:} 
    \begin{itemize}
        \item[(a)] $J(x), R(x) \in \R^{n\times n}$ with
        $J(x)^\top=-J(x)$ and $R(x)=R(x)^\top\ge0$
        pointwise at every $x$.

        \item[(b)] $Q,E\in \R^{n\times n}$ with
        $Q=Q^\top\ge0$ and $E^\top Q=Q^\top E\ge0$.

        \item[(c)] $(E,\check A(x))$ is regular and of index one along a solution $x$, with $\check A(x)$ being the Jacobian of the
        right-hand side of \eqref{eq:nonlinear-pHDAE-dynamics}.
    \end{itemize}

    \item\label{def-item:input}
    \emph{Input regularity:}
    $u\in C^2([0,T],\R^m)$.

    \item
    Port matrix $B\in\R^{n\times m}$.

    \item
    \emph{Consistent initial value:}
    $x(0)=x_0\in\R^n$ such that there exists $v_0\in\R^n$ satisfying
    \[
    Ev_0
    =
    \bigl(J(x_0)-R(x_0)\bigr)e(x_0)+Bu(0).
    \]
    \item
    Hamiltonian $H$ satisfying
    $E^\top e(x)=\nabla H(x)$.

    \item
    \emph{Solution regularity:}
    $Ex\in C^3([0,T],\R^n)$ and
    $x\in C^2([0,T],\R^n)$.

    \item\label{def-item:H-bound}
    $H(x)$ is bounded from below: there exists $C>0$ such that
    \[
    H(x)>-C
    \qquad\text{for all }x\in\R^n.
    \]
    Choose $C_0>C$.
    \hfill $\Box$
    \end{enumerate}
\end{definition}

By the pH structure, smooth solutions satisfy the power balance
\[
\frac{\mathrm d}{\mathrm dt}H(x(t))
=
-e(x)^\top R(x)e(x)+u^\top y.
\]

Using matrix $Q$, we separate the linear part $Qx$ of the effort map such that the Hamiltonian splits accordingly, 
\begin{equation*} 
    e(x) = Qx + e_{\mathrm{nl}}(x), \qquad 
    H(x) = \tfrac{1}{2} x^\top E^\top Qx + H_{\mathrm{nl}}(x).
\end{equation*}

Next, we split the dynamics into a constant and a state-dependent
contribution. To this end, we decompose the structure and dissipation
matrices as
\begin{equation}\label{eq:JR-decomposition}
    J(x)=J_0+J_1(x),
    \qquad
    R(x)=R_0+R_1(x),
\end{equation}
where
\[
    J_0^\top=-J_0,
    \qquad
    J_1(x)^\top=-J_1(x),
    \qquad
    R_0=R_0^\top\ge0,
\]
and the total dissipation satisfies $R(x)\ge0$, with no separate
definiteness assumption on $R_1(x)$. We define the constant-core
operator
\[
    A:=-(J_0-R_0)Q.
\]

\begin{assumption}[Constant-core index-one structure]
\label{ass:split-nonlinearity}
For the splitting \eqref{eq:JR-decomposition}, the constant-core pencil
$(E,A)$ is regular and of index one.
\hfill $\Box$
\end{assumption}
Under Assumption~\ref{ass:split-nonlinearity}, substituting the
decompositions \eqref{eq:JR-decomposition} into
\eqref{eq:nonlinear-pHDAE-dynamics} yields the modified descriptor system

\begin{equation} \label{eq:modified-pHDAE}
    \frac{\diff(Ex)}{\diff t} +  A x + g(x) = Bu(t),
\end{equation}
where all state dependence is gathered into $g(x)$, leaving the linear operator $ A$ constant:
\begin{equation} \label{eq:A-g-definition}
    A := -(J_0 - R_0)Q, \qquad g(x) := -(J_0 - R_0)e_{\mathrm{nl}}(x) - \bigl(J_1(x) - R_1(x)\bigr)e(x).
\end{equation}

In contrast to the standard GSAV setting \cite{HuangShen2022}, the matrix $A$ is generally nonsymmetric ($A^\top = Q(J_0 + R_0) \neq A$ whenever $J_0 \neq 0$). Its structural positivity is revealed instead via the $Q$-weighted pairing
\begin{equation*} %
    \langle a,b\rangle_Q := a^\top Qb, \qquad \|a\|_Q^2 := a^\top Qa,
\end{equation*}
which is a semi-inner product with induced semi-norm $\|\cdot\|_Q$ and an inner product if $Q>0$. Since $QJ_0Q$ is skew-symmetric, the skew interconnection generates no energy, giving $Q$-accretivity:
\begin{equation} \label{eq:Q-dissipativity}
    \langle A x, x\rangle_Q = x^\top QJ_0Qx + x^\top QR_0Qx = (Qx)^\top R_0(Qx) \ge 0.
\end{equation}
This accretivity replaces the positive self-adjoint operator of standard GSAV.

Finally, solutions of \eqref{eq:modified-pHDAE} obey the power balance
\begin{equation} \label{eq:modified-energy-law}
    \frac{\diff}{\diff t}H(x) = -K(x) + \langle u,y\rangle, \qquad K(x) := e(x)^\top R(x)e(x) \ge 0, \quad y := B^\top e(x).
\end{equation}
Because energy may enter or exit through the external ports, $H(x)$ need not decrease monotonically; the auxiliary-variable dynamics are therefore driven by this open power balance rather than a closed-system decay law.
%-----------------------------------
\subsection{The GSAV scheme for pH-DAEs} \label{sec:GSAV-Scheme}
Using that $H$ is bounded from below, Def.~\ref{def:pH-class}.\ref{def-item:H-bound}, we can introduce the scalar auxiliary variable $r$
\begin{equation}\label{eq:r-and-h0}
r(t):=H(x(t))+C_0>0,
\qquad
h_0:=C_0-C>0.
\end{equation}
which expands \eqref{eq:modified-pHDAE} into the system that will be discretized in place of the original one,
    \begin{equation}\label{eq:expanded-conti-system}
        \frac{\diff (Ex)}{\diff t}+A x+g(x)=Bu(t), \qquad
        \frac{\diff r}{\diff t}=-K(x)+\langle u,y\rangle.
    \end{equation}
In this continuous setting, we have $r\equiv\H(x)$ and hence $\xi:=r/\H(x)\equiv1$. 

\paragraph{Time discretization: step-1.}
For time step $\delta t>0$ and step-averaged input $\bar{u}^{\ell + 1}:=\frac{1}{\delta t}\int_{t^\ell}^{t^{\ell + 1}}u(s)\,\diff s$, the relaxed IMEX BDF-$k$ scheme ($k\in\{1,2\}$) 
advances given $x^{\ell},\, r^\ell$ (and $\ell-1$ for $k=2$)
\begin{subequations}\label{eq:pHDAE-Discretization}
\begin{align}
\frac{\alpha_k E\bar{x}^{\ell + 1}-A_k[Ex^\ell]}{\delta t}+A\bar{x}^{\ell + 1}+g\bigl(G_k[x^\ell]\bigr)
&=BC_k[\bar{u}^{\ell + 1}], \label{eq:scheme-primal}\\
\frac{\tilde{r}^{\ell + 1}-r^\ell}{\delta t}
&=-\frac{\tilde{r}^{\ell + 1}}{H(\bar{x}^{\ell + 1})+C_0}K(\bar{x}^{\ell + 1})+\mathcal{P}_k^{\ell + 1}, \label{eq:scheme-scalar}\\
\xi^{\ell + 1}
&=\frac{\tilde{r}^{\ell + 1}}{H(\bar{x}^{\ell + 1})+C_0}, \label{eq:scheme-xi}\\
x^{\ell + 1}
&=\eta_k^{\ell + 1}\bar{x}^{\ell + 1}, \quad
\eta_k^{\ell + 1}:=1-\bigl(1-\xi^{\ell + 1}\bigr)^{k+1}, \label{eq:scheme-relax}
\end{align}
\end{subequations}
Here, $\xi^{\ell+1}$ and $\eta_k^{\ell+1}$ are auxiliary quantities
and $\tilde r^{\ell+1}$ is a preliminary value of the scalar variable,
whereas the initial data are $\bar x^0:=x^0$ and
$r^0:=\H(x^0)$. The final value $r^{\ell+1}$ is defined in
\eqref{eq:eop-passive} below.
The BDF operators and external port approximations ($y^j:=B^\top e(\bar x^j).$) are:
\begin{align*}
k=1\!: \quad & \alpha_1=1, \; A_1[Ex^\ell]=Ex^\ell, \; G_1[x^\ell]=x^\ell, \; C_1[\bar{u}^{\ell + 1}]=\bar{u}^{\ell + 1}, \; \mathcal{P}_1^{\ell + 1}:=\langle\bar u^{\ell + 1},y^{\ell + 1}\rangle,\\ %
k=2\!: \quad & \alpha_2=\tfrac{3}{2}, \; A_2[Ex^\ell]=2Ex^\ell-\tfrac{1}{2}Ex^{\ell-1}, \; G_2[x^\ell]=2x^\ell-x^{\ell-1},
\\ 
& C_2[\bar{u}^{\ell + 1}]:=\tfrac{3}{2}\bar{u}^{\ell + 1}-\tfrac{1}{2}\bar{u}^{\ell}, \qquad
\mathcal{P}_2^{\ell + 1}:=\left\langle\bar{u}^{\ell + 1},\tfrac{y^{\ell + 1}+y^\ell}{2}\right\rangle. \nonumber
\end{align*}
BDF-2 is initialized via BDF-1. With the discrete port work defined by
\begin{equation}\label{eq:discrete-port-work}
    \widehat W_k^{\ell+1} := \delta t\,\mathcal P_k^{\ell+1},
\end{equation}
the BDF-2 quadrature for exact output $y(t)=B^\top e(x(t))$ satisfies $\delta t\langle\bar u^{\ell+1}, \frac{y(t^{\ell+1})+y(t^\ell)}{2}\rangle = \int_{t_\ell}^{t^{\ell+1}}\langle u,y\rangle\,\diff t + \O(\delta t^3)$. While \eqref{eq:scheme-scalar} is only first-order, the relaxation factor $\eta_k^{\ell+1}$ restores order-$k$ state accuracy.
\paragraph{Time discretization: step-2 (adaptation to pH).}
For open port-Hamiltonian systems, the auxiliary energy need not decay monotonically due to energy exchange through the ports. We therefore replace the closed-system EOP bound $r^\ell$ in \cite[Thm.~2.5]{LiuZhangLi2024} by the discrete passivity budget
\[
s_k^{\ell + 1}:=r^\ell+\widehat W_k^{\ell + 1},
\]
and define the EOP projection
\begin{equation}\label{eq:eop-passive}
    r^{\ell + 1}
    :=
    \min\left\{
        H(x^{\ell + 1})+C_0,\;
        s_k^{\ell + 1}
    \right\}.
\end{equation}
Hence, we obtain the discrete passivity inequality:
\[
r^{\ell + 1}-r^\ell\le \widehat W_k^{\ell + 1}.
\]

For the numerical verification, let $x_{\mathrm{ref}}(t^\ell)$ denote the reference solution described in Section~\ref{sec:numerics-accuracy}, and define the global state and Hamiltonian errors by
\[
    \varepsilon_x:=\max_{0\le \ell \le N_t}\|x^\ell-x_{\mathrm{ref}}(t^\ell)\|_2,
    \qquad
    \varepsilon_H:=\max_{0\le \ell \le N_t}|H(x^\ell)-H(x_{\mathrm{ref}}(t^\ell))|.
\]
Figure~\ref{fig:main1} shows the observed 
temporal orders for the nonlinear benchmark of App.~\ref{app:benchmark}. BDF-1 exhibits
first-order convergence; BDF-2 and the implicit midpoint method
show second-order convergence in both $\varepsilon_x$ and $\varepsilon_H$. The order-$k$
convergence of the proposed schemes is established rigorously in
Theorem~\ref{thm:convergence}.

\begin{figure}[t]
  \centering
  \captionsetup[subfigure]{font=footnotesize,skip=1pt}

  \begin{subfigure}[b]{0.48\textwidth}
    \centering
    \includegraphics[
      width=\linewidth,
      trim={2 5 2 1},
      clip
    ]{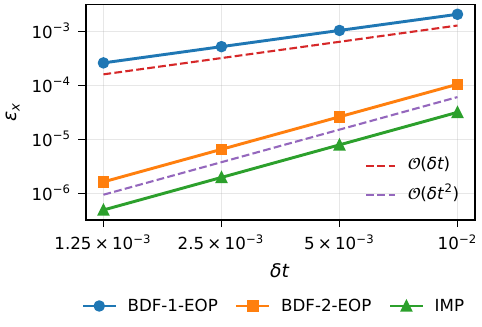}
    \caption{State error.}
    \label{fig:state-convergence}
  \end{subfigure}
  \hfill
  \begin{subfigure}[b]{0.48\textwidth}
    \centering
    \includegraphics[
      width=\linewidth,
      trim={2 5 2 1},
      clip
    ]{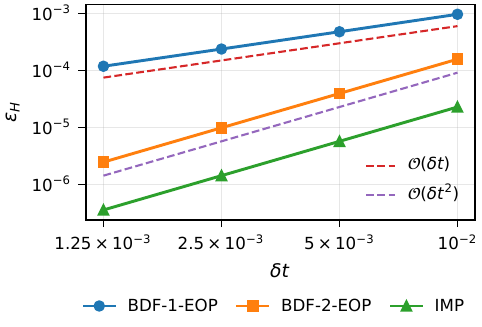}
    \caption{Hamiltonian error.}
    \label{fig:hamiltonian-convergence}
  \end{subfigure}

  \caption{Observed temporal convergence.}
  \label{fig:main1}
\end{figure}
Before turning to the convergence analysis, we summarize several structural and computational properties of the discretization.

\begin{remark}% 
\label{rem:properties}
\begin{enumerate}[leftmargin=*,itemsep=1pt,topsep=1pt]
    \item \emph{Linearity and Solvability:} Since all state-dependent terms are contained in $g$ and evaluated explicitly at $G_k[x^\ell]$, \eqref{eq:scheme-primal} is linear in $\bar x^{\ell + 1}$. By Proposition~\ref{prop:solvability}, $\frac{\alpha_k}{\delta t}E+A$ is invertible for all $\delta t>0$, and its factorization is reused across all steps for fixed $(k,\delta t)$.
    
    \item \emph{Explicit $\tilde{r}^{\ell + 1}$:} 
    \eqref{eq:scheme-scalar} can be stated in explicit form (with strictly positive denominator, since $K\ge0$, $H+C_0>0$):
    \[
        \tilde{r}^{\ell + 1}
        =\frac{r^\ell+\delta t\,\mathcal{P}_k^{\ell + 1}}
        {1+\delta t\,K(\bar x^{\ell + 1})/(H(\bar x^{\ell + 1})+C_0)}.
    \]

    \item \emph{Physical dissipation:} Evaluating $K(x)$ with the full state-dependent dissipation operator ensures that the auxiliary rate matches the physical energy law \eqref{eq:modified-energy-law}, which is essential for the consistency of $\xi^{\ell + 1}$.
    \item \emph{Extrapolation and Drift:} Evaluating $g$ at $G_k[x^\ell]$ follows the IMEX construction of \cite{HuangShen2022}. The resulting algebraic error residual enters the convergence analysis as $\rho_k^{\ell + 1}$ and is of order $\O(\delta t^k)$ under the assumptions of Section~\ref{sec:error-analysis}. \hfill $\Box$

\end{enumerate}
\end{remark}

%--------------------------------------------
\section{Analysis of the pH-DAE-GSAV scheme}
\label{sec:gsav-phdae-analysis}
%--------------------------------------------
	In this part, we analyze the projected GSAV scheme \eqref{eq:pHDAE-Discretization}--\eqref{eq:eop-passive} with $k\in\{1,2\}$, applied to \eqref{eq:expanded-conti-system}. We will establish
	convergence of order $k$ of the discrete solution $x^{\ell + 1}$ to $x(t^{\ell + 1})$, consistency of the relaxation scalar $\xi^{\ell + 1}$, and positivity and energy consistency of the projected auxiliary variable $r^{\ell + 1}$.

In the following, we write $(a^\top,b^\top)^\top$ for block column vectors.

\subsection{The index-one descriptor structure}\label{sec:wellposedness}

The analysis rests on splitting \eqref{eq:expanded-conti-system} into differential and algebraic parts via the index-one pencil $(E,A)$ (Ass.~\ref{ass:split-nonlinearity}). There exist nonsingular $S,T\in\R^{n\times n}$ such that
\begin{equation}\label{eq:kronecker}
SET=\begin{pmatrix}I_d&0\\0&0\end{pmatrix},\qquad
SAT=\begin{pmatrix}\tilde A_{11}&\tilde A_{12}\\\tilde A_{21}&\tilde A_{22}\end{pmatrix},
\end{equation}
with $\tilde A_{22}\in\R^{(n-d)\times(n-d)}$ nonsingular. In coordinates $T^{-1}x=(x_d^\top,x_a^\top)^\top$, let $P_d:=T\begin{pmatrix}I_d\\0\end{pmatrix}$ and $M_{dd}:=P_d^\top E^\top QP_d\in\R^{d\times d}$. Since $ET(0,x_a^\top)^\top=0$ and $E^\top Q=(E^\top Q)^\top$, we have $T^\top E^\top QT=\operatorname{blkdiag}(M_{dd},0)$, so the $E^\top Q$ semi-norm controls only $x_d$. Writing $Sg(T(x_d^\top,x_a^\top)^\top)=\bigl(g_d(x_d,x_a)^\top,g_a(x_d,x_a)^\top\bigr)^\top$, the algebraic component satisfies
\begin{equation}\label{eq:nonlinear-algebraic-constraint}
\tilde A_{21}x_d+\tilde A_{22}x_a+g_a(x_d,x_a)=(SBu)_a.
\end{equation}

\begin{assumption}\label{ass:DAE}
 For the pH model \eqref{eq:nonlinear-pHDAE} (Def.~\ref{def:pH-class}),
 we use the following DAE-related prerequisites: 
  \begin{enumerate}[leftmargin=*,itemsep=1pt,topsep=1pt]
      \item\label{ass-DAE-item:mass} In the index-one form \eqref{eq:kronecker} with differential inclusion $P_d$, the compression $M_{dd}:=P_d^\top E^\top QP_d>0$.

      \item\label{ass-DAE-item:regularity} \emph{Nonlinear regularity and constraint:} The maps $e_{\mathrm{nl}}$, $J_1$, and $R_1$ are $C^1$ on an open neighborhood of the relevant compact sets, so $g$ is locally Lipschitz. With $Sg(T(x_d^\top,x_a^\top)^\top)=\bigl(g_d(x_d,x_a)^\top,g_a(x_d,x_a)^\top\bigr)^\top$, the algebraic Jacobian $\tilde A_{22}+D_{x_a}g_a(x_d,x_a)$ is nonsingular on a neighborhood of the exact trajectory. \hfill $\Box$
  \end{enumerate}
\end{assumption}

\begin{remark}[Role of the regularity assumptions]
The regularity assumptions above are convenient sufficient conditions for
a unified BDF-1/BDF-2 analysis and are not intended to be minimal.
For order $k$, the consistency argument only requires the corresponding
order of temporal regularity of $Ex$, $x$, and $u$, while the $C^1$
assumptions on $e_{\mathrm{nl}}$, $J_1$, and $R_1$ are used to obtain
uniform local Lipschitz bounds for the nonlinear terms on the confinement
sets. Thus, the assumptions can be weakened accordingly, for example to
bounded weak derivatives of the required order and local Lipschitz
regularity. The nondegeneracy condition $M_{dd}>0$, in contrast, is
structural for the present energy argument, since it provides control of
the differential component through the $E^\top Q$ semi-norm. \hfill $\Box$
\end{remark}

By Assumption~\ref{ass:DAE}.\ref{ass-DAE-item:regularity},
$\tilde A_{22}+D_{x_a}g_a$ is nonsingular near the exact trajectory, so
\eqref{eq:nonlinear-algebraic-constraint} locally determines  
$x_a$ as a function of $(x_d,u)$.

Since $g$ is evaluated explicitly in the proposed scheme, the algebraic
component of the primal step is
\[
    \tilde A_{21}\bar x_d^{\ell + 1}
    +\tilde A_{22}\bar x_a^{\ell + 1}
    +g_a\bigl(G_k[x^\ell]\bigr)
    =
    \bigl(SBC_k[\bar u^{\ell + 1}]\bigr)_a.
\]
Thus only the constant block $\tilde A_{22}$ acts implicitly, and the
algebraic variable remains obtainable by a linear solve although the
continuous algebraic constraint may itself be nonlinear.

We now introduce the sets in which the discrete trajectories will be shown to remain. % to lie.
\begin{definition}[Confinement sets]\label{def:confinement-sets}
Let $\Gamma:=\{x(t):t\in[0,T]\}$ denote the exact trajectory and fix
$\rho_0>0$. The closed tubular neighborhood and associated BDF
extrapolation set are
\[
\mathcal T_{\rho_0}
:=
\{x\in\R^n:\operatorname{dist}(x,\Gamma)\le\rho_0\},
\qquad
\mathcal X_{\mathrm{ext}}
:=
\mathcal T_{\rho_0}\cup
\{2x-z:x,z\in\mathcal T_{\rho_0}\}.
\]
In index-one coordinates
$T^{-1}x=(x_d^\top,x_a^\top)^\top$, for $C_d,C_a>0$, define
\[
\mathcal X_{\mathrm{pre}}(C_d,C_a)
:=
\left\{
T(z_d^\top,z_a^\top)^\top:
\|z_d\|_2\le C_d,\;
\|z_a\|_2\le C_a
\right\},
\]
abbreviated as $\mathcal X_{\mathrm{pre}}$ when $C_d,C_a$ are the
uniform bounds from Lemma~\ref{lem:pre-relaxation}.
\end{definition}

\begin{proposition}[Solvability of the implicit step] \label{prop:solvability}
    Given the pH model~\eqref{eq:nonlinear-pHDAE} (Def.~\ref{def:pH-class}). Let $k\in\{1,2\}$ and let Ass.~\ref{ass:DAE}.\ref{ass-DAE-item:mass} hold. Then $\frac{\alpha_k}{\delta t}E+A$ is invertible for every $\delta t>0$. Consequently, the primal step \eqref{eq:scheme-primal} admits a unique solution $\bar x^{\ell + 1}$ 
    (for every $\ell$) %at every time level 
    with a matrix factorization that is constant across all time steps for fixed $k$ and $\delta t$.
\end{proposition}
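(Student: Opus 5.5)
The plan is to show that $M_k:=\frac{\alpha_k}{\delta t}E+A$ has trivial kernel by testing a kernel element against $Q$, in the spirit of the $Q$-accretivity \eqref{eq:Q-dissipativity}, and then to use the index-one form \eqref{eq:kronecker} to dispose of the component that the energy pairing does not see.

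First I test the kernel equation against $Q$. Let $v\in\R^n$ satisfy $M_kv=0$. Multiplying from the left by $v^\top Q$ and using $v^\top QEv=v^\top E^\top Qv$ together with \eqref{eq:Q-dissipativity}, I get
\[
0=\frac{\alpha_k}{\delta t}\,v^\top E^\top Q v+(Qv)^\top R_0(Qv).
\]
Both terms are nonnegative, since $E^\top Q\ge0$, $R_0\ge0$, $\alpha_k>0$ and $\delta t>0$. Hence each term vanishes, and in particular $v^\top E^\top Qv=0$.

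Next I move to index-one coordinates. Write $w:=T^{-1}v=(w_d^\top,w_a^\top)^\top$. By the identity $T^\top E^\top QT=\operatorname{blkdiag}(M_{dd},0)$ derived after \eqref{eq:kronecker}, the vanishing term becomes $w_d^\top M_{dd}w_d=0$. Assumption~\ref{ass:DAE}.\ref{ass-DAE-item:mass} gives $M_{dd}>0$, so $w_d=0$. Consequently
\[
Ev=S^{-1}(SET)w=S^{-1}\begin{pmatrix}w_d\\0\end{pmatrix}=0,
\]
and the kernel equation reduces to $Av=0$. Multiplying by $S$ gives $SATw=0$, whose second block row reads $\tilde A_{21}w_d+\tilde A_{22}w_a=\tilde A_{22}w_a=0$. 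Since $(E,A)$ is regular of index one (Assumption~\ref{ass:split-nonlinearity}), $\tilde A_{22}$ is nonsingular, so $w_a=0$. Therefore $v=0$ and $M_k$ is invertible for every $\delta t>0$.

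For the consequences, \eqref{eq:scheme-primal} can be rewritten as $M_k\bar x^{\ell+1}=\frac{1}{\delta t}A_k[Ex^\ell]-g(G_k[x^\ell])+BC_k[\bar u^{\ell+1}]$. The right-hand side depends only on already computed data, so $\bar x^{\ell+1}$ exists and is unique. The matrix $M_k$ depends only on $k$, $\delta t$ and the constant matrices $E,J_0,R_0,Q$, not on $\ell$, so one LU factorization can be reused at every step. For $k=2$, the BDF-1 start-up step uses $M_1$, which is covered by the same argument.

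I do not expect a real obstacle. The only delicate point is that the energy pairing is merely a semi-inner product, so the dissipation inequality alone controls just $w_d$. The algebraic part must then be recovered from nonsingularity of $\tilde A_{22}$, after first showing $Ev=0$ so that the $\frac{\alpha_k}{\delta t}E$ term drops out.
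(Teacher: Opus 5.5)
Your proof is correct and follows the paper's argument: you test the kernel equation in the $Q$-pairing, use $M_{dd}>0$ to obtain $w_d=0$, and then use nonsingularity of $\tilde A_{22}$ in the algebraic block row to obtain $w_a=0$. Your intermediate step showing $Ev=0$ is harmless but not needed, since the second block row of $SET$ vanishes, so the $\frac{\alpha_k}{\delta t}E$ term never enters the algebraic block.
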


\begin{proof}
    Let $v\in\mathbb{R}^n$ satisfy $(\frac{\alpha_k}{\delta t}E+A)v=0$. Testing with $v$ in the $Q$-weighted pairing and using \eqref{eq:Q-dissipativity} gives
    \[
        0 = \frac{\alpha_k}{\delta t}\langle Ev,v\rangle_Q + \langle A v,v\rangle_Q = \frac{\alpha_k}{\delta t}|v|_E^2 + (Qv)^\top R_0(Qv).
    \]
    Since $\alpha_k/\delta t>0$ and $R_0\ge0$, both terms are nonnegative, so $|v|_E^2 = 0$. In coordinates $T^{-1}v=(v_d^\top,v_a^\top)^\top$, we have $0 = |v|_E^2 = v_d^\top M_{dd}v_d$. Because $M_{dd}>0$ (Ass.~\ref{ass:DAE}.\ref{ass-DAE-item:mass}), $v_d=0$. 
    
    Transforming the homogeneous system via $S$ with $v_d=0$ reduces the algebraic block to $\tilde A_{22}v_a=0$. Since $\tilde A_{22}$ is nonsingular by the index-one structure, $v_a=0$, proving that $v=0$ and $\frac{\alpha_k}{\delta t}E+A$ is invertible. The factorization is independent of $\ell$ since $E$, $A$, $\alpha_k$, and $\delta t$ are constant.
\end{proof}
Since the nonlinear terms are evaluated at the intermediate state
$\bar x^{\ell + 1}$, the subsequent Lipschitz and boundedness estimates require
this state to remain in a fixed compact set. The following lemma provides
this preliminary confinement before the global bootstrap argument is available.

\begin{lemma}[Pre-relaxation confinement]\label{lem:pre-relaxation}
Given the pH model~\eqref{eq:nonlinear-pHDAE} (Def.~\ref{def:pH-class}) and Ass.~\ref{ass:DAE}, fix $\delta t_{\mathrm{pre}}>0$. Let $k\in\{1,2\}$, $0<\delta t\le\delta t_{\mathrm{pre}}$, and suppose $x^{\ell-j}\in\mathcal T_{\rho_0}$ for $j=0,\ldots,k-1$. Then there exist $C_d,C_a>0$, independent of $\ell$, $\delta t$, and $k$, such that $\|\bar z_d^{\ell+1}\|_2\le C_d$ and $\|\bar z_a^{\ell+1}\|_2\le C_a$. Consequently, $\bar x^{\ell+1}\in \mathcal X_{\mathrm{pre}}(C_d,C_a) =: \mathcal X_{\mathrm{pre}}$ by Definition~\ref{def:confinement-sets}, which is compact.
\end{lemma}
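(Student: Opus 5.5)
The plan is to read off $\bar x^{\ell+1}$ directly from the transformed linear primal step \eqref{eq:scheme-primal}. Multiply \eqref{eq:scheme-primal} by $S$ and write $\bar x^{\ell+1}=T(\bar z_d^{\ell+1\top},\bar z_a^{\ell+1\top})^\top$, and similarly $x^{\ell-j}=T(z_d^{\ell-j\top},z_a^{\ell-j\top})^\top$. Using \eqref{eq:kronecker}, the system splits into a differential block
\[
\Bigl(\tfrac{\alpha_k}{\delta t}I_d+\tilde A_{11}\Bigr)\bar z_d^{\ell+1}+\tilde A_{12}\bar z_a^{\ell+1}
=\tfrac{1}{\delta t}\bigl(A_k[SEx^\ell]\bigr)_d-\bigl(Sg(G_k[x^\ell])\bigr)_d+\bigl(SBC_k[\bar u^{\ell+1}]\bigr)_d
\]
and the algebraic block displayed before Definition~\ref{def:confinement-sets}. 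Because $(SEx)_a=0$, the algebraic block contains no $1/\delta t$ terms. Since $\tilde A_{22}$ is nonsingular, it yields
\[
\bar z_a^{\ell+1}=\tilde A_{22}^{-1}\bigl((SBC_k[\bar u^{\ell+1}])_a-g_a(G_k[x^\ell])-\tilde A_{21}\bar z_d^{\ell+1}\bigr).
\]
Substituting this into the differential block gives a Schur-complement equation
\[
\Bigl(\tfrac{\alpha_k}{\delta t}I_d+\Sigma\Bigr)\bar z_d^{\ell+1}=\tfrac{1}{\delta t}\bigl(A_k[SEx^\ell]\bigr)_d+F^\ell,
\qquad
\Sigma:=\tilde A_{11}-\tilde A_{12}\tilde A_{22}^{-1}\tilde A_{21}.
\]
Here $F^\ell$ collects $g$ evaluated at the extrapolant and the input terms.

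The next step is to bound the data uniformly. By hypothesis $x^{\ell-j}\in\mathcal T_{\rho_0}$, so $G_k[x^\ell]\in\mathcal X_{\mathrm{ext}}$. This set is compact because $\Gamma$ is compact as the continuous image of $[0,T]$. By Assumption~\ref{ass:DAE}.\ref{ass-DAE-item:regularity}, $g$ is continuous there, so $\|g(G_k[x^\ell])\|$ is bounded by a constant $C_g$. The inputs satisfy $|\bar u^{\ell+1}|,|\bar u^\ell|\le\|u\|_\infty$, and the $d$-components of $A_k[SEx^\ell]$ are combinations of $z_d^{\ell},z_d^{\ell-1}$ with coefficients at most $2$, bounded by $\sup_{\mathcal T_{\rho_0}}\|T^{-1}x\|$. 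Hence $\|F^\ell\|\le C_F$ and $\|(A_k[SEx^\ell])_d\|\le C_{\mathrm{hist}}$, both independent of $\ell$, $\delta t$ and $k$. Taking maxima over $k\in\{1,2\}$ removes the $k$-dependence.

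The main obstacle is bounding $(\alpha_k I_d+\delta t\,\Sigma)^{-1}$ uniformly for all $\delta t\in(0,\delta t_{\mathrm{pre}}]$. This matters because $\bar z_d^{\ell+1}=(\alpha_k I+\delta t\Sigma)^{-1}\bigl((A_k[SEx^\ell])_d+\delta t F^\ell\bigr)$. Invertibility for each $\delta t$ follows from Proposition~\ref{prop:solvability}, since the Schur complement is nonsingular exactly when $\frac{\alpha_k}{\delta t}E+A$ is. Near $\delta t=0$ the matrix tends to $\alpha_k I$, which is invertible. The map $\delta t\mapsto(\alpha_kI+\delta t\Sigma)^{-1}$ is therefore continuous on the compact interval $[0,\delta t_{\mathrm{pre}}]$, hence bounded by some $C_\Sigma$.

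Alternatively, one can use the energy structure. With $v=\bar x^{\ell+1}$, test \eqref{eq:scheme-primal} in the $Q$-pairing; by \eqref{eq:Q-dissipativity} one gets $\alpha_k v_d^\top M_{dd}v_d\le$ (right-hand side)$\cdot Qv$ terms. I would use this only if the compactness argument needed justification; the compactness argument suffices here. It gives $\|\bar z_d^{\ell+1}\|\le C_\Sigma(C_{\mathrm{hist}}+\delta t_{\mathrm{pre}}C_F)=:C_d$. Then $C_a:=\|\tilde A_{22}^{-1}\|\bigl(\|SB\|\,2\|u\|_\infty+C_g'+\|\tilde A_{21}\|C_d\bigr)$. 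Finally, $\mathcal X_{\mathrm{pre}}(C_d,C_a)$ is the image of a closed bounded set under the linear map $T$, so it is compact.
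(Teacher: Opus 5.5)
Your proposal is correct and follows essentially the same route as the paper. You eliminate $\bar z_a^{\ell+1}$ via $\tilde A_{22}^{-1}$ and reduce to the Schur-complement system $(\alpha_k I_d+\delta t\,\Sigma)\bar z_d^{\ell+1}=A_k[z_d^\ell]+\delta t\,F^\ell$. You bound its inverse uniformly on $[0,\delta t_{\mathrm{pre}}]$ using Proposition~\ref{prop:solvability} together with invertibility of $\alpha_k I_d$ at $\delta t=0$. You bound the data by compactness of $\mathcal T_{\rho_0}$ and $\mathcal X_{\mathrm{ext}}$, continuity of $g$ and boundedness of $u$. One cosmetic point: the $k=2$ history term is bounded by $\tfrac52\sup_{\mathcal T_{\rho_0}}\|T^{-1}x\|_2$, not by the supremum itself, which does not affect the argument.
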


\begin{proof}
In coordinates \eqref{eq:kronecker}, set $z^j:=T^{-1}x^j$, $\bar z^{\ell+1}:=T^{-1}\bar x^{\ell+1}$, and $F_k^\ell := S\bigl(BC_k[\bar u^{\ell+1}]-g(G_k[x^\ell])\bigr) = \bigl((F_{d,k}^\ell)^\top,(F_{a,k}^\ell)^\top\bigr)^\top$. Eliminating $\bar z_a^{\ell+1} = \tilde A_{22}^{-1}(F_{a,k}^\ell-\tilde A_{21}\bar z_d^{\ell+1})$ from the transformed primal system yields
\begin{equation}\label{eq:pre-relaxation-schur}
    \bar z_d^{\ell+1} = \bigl(\alpha_kI_d+\delta t\,\tilde A_{\mathrm{Schur}}\bigr)^{-1}\bigl(A_k[z_d^\ell]+\delta t\,\widehat F_k^\ell\bigr),
\end{equation}
where $\tilde A_{\mathrm{Schur}} := \tilde A_{11}-\tilde A_{12}\tilde A_{22}^{-1}\tilde A_{21}$ and $\widehat F_k^\ell := F_{d,k}^\ell-\tilde A_{12}\tilde A_{22}^{-1}F_{a,k}^\ell$. By Proposition~\ref{prop:solvability} and nonsingularity of $\tilde A_{22}$, $\alpha_kI_d+\delta t\,\tilde A_{\mathrm{Schur}}$ is nonsingular for all $\delta t\ge0$, so $\max_{k\in\{1,2\}}\sup_{0\le\delta t\le\delta t_{\mathrm{pre}}}\|(\alpha_kI_d+\delta t\,\tilde A_{\mathrm{Schur}})^{-1}\| < \infty$. Compactness of $\mathcal T_{\rho_0}$ bounds $A_k[z_d^\ell]$ and ensures $G_k[x^\ell]\in\mathcal X_{\mathrm{ext}}$ (Definition~\ref{def:confinement-sets}); continuity of $g$ and boundedness of $u$ therefore bound $F_k^\ell$ and $\widehat F_k^\ell$ uniformly. Then \eqref{eq:pre-relaxation-schur} yields $\|\bar z_d^{\ell+1}\|_2\le C_d$, the algebraic relation gives $\|\bar z_a^{\ell+1}\|_2\le C_a$, and transforming via $T$ completes the proof.
\end{proof}

The analysis proceeds in three steps. First, the descriptor structure is used to recover control of the algebraic variables from the energy semi-norm. Second, a confinement argument provides uniform bounds for the explicitly evaluated nonlinear terms. These ingredients are then combined with BDF energy identities and the EOP relaxation to obtain convergence, positivity, and passivity.

\subsection{Relating the \texorpdfstring{$Q$}{Q}- and \texorpdfstring{$E$}{E}-semi-norms}

\begin{definition} \label{def:seminorms}
    Let $Q=Q^\top\ge0$ and $E^\top Q=Q^\top E\ge0$ as in Def.~\ref{def:pH-class}.
    We define the semi-norms on $\R^n$:
    \begin{equation} %\label{eq:Q-norm}
        \|x\|_Q := \sqrt{x^\top Qx}, \qquad |x|_E := \sqrt{x^\top E^\top Qx}.
        \tag*{$\Box$}
    \end{equation}
\end{definition}

\begin{lemma}[Control of the differential component] \label{lem:diff-control}
    Given the pH model~\eqref{eq:nonlinear-pHDAE} (Def.~\ref{def:pH-class}) and Ass.~\ref{ass:DAE}. 
    Let $T^{-1}x=(x_d^\top,x_a^\top)^\top$ for $x\in\R^n$ \eqref{eq:kronecker}. Then there exists $c:=\lambda_{\min}(M_{dd})^{-1/2}>0$ such that
    \begin{equation} \label{eq:diff-control}
        \|x_d\|_2 \le c\,|x|_E \qquad\text{for all }x\in\R^m.
    \end{equation}
\end{lemma}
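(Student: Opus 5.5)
The argument rests on the block structure of $T^\top E^\top Q T$ noted after \eqref{eq:kronecker}. Write $x = T(x_d^\top,x_a^\top)^\top$. First, we have $ET(0,x_a^\top)^\top = S^{-1}\operatorname{blkdiag}(I_d,0)(0,x_a^\top)^\top = 0$. Combined with the symmetry $E^\top Q = Q^\top E$, this gives $T^\top E^\top Q T = \operatorname{blkdiag}(M_{dd},0)$. To check this, split $T^\top E^\top QT$ into blocks. The $(2,1)$ block is $(ET e_a)^\top$-type: it equals $(ET(0,I)^\top)^\top QT(\cdot)=0$. By symmetry the $(1,2)$ block also vanishes. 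The $(2,2)$ block vanishes for the same reason. The $(1,1)$ block is $P_d^\top E^\top Q P_d = M_{dd}$ by definition.

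Consequently, for every $x$,
\[
|x|_E^2 = x^\top E^\top Q x = (x_d^\top,x_a^\top)\,T^\top E^\top Q T\,(x_d^\top,x_a^\top)^\top = x_d^\top M_{dd} x_d .
\]

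Next we use Ass.~\ref{ass:DAE}.\ref{ass-DAE-item:mass}. Since $M_{dd}$ is symmetric and positive definite, $\lambda_{\min}(M_{dd})>0$, and the Rayleigh quotient gives $x_d^\top M_{dd}x_d \ge \lambda_{\min}(M_{dd})\|x_d\|_2^2$. Taking square roots yields
\[
\|x_d\|_2 \le \lambda_{\min}(M_{dd})^{-1/2}|x|_E = c\,|x|_E ,
\]
which is \eqref{eq:diff-control}. The statement writes $x\in\R^m$, but this should be read as $x\in\R^n$.

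The only step needing care is the vanishing of the off-diagonal blocks. The argument only shows directly that $ET(0,x_a^\top)^\top=0$ kills one side of the product. The other side then follows by transposing and using the symmetry of $E^\top Q$ from Def.~\ref{def:pH-class}(b). Everything else is elementary linear algebra.
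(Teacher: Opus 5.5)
Your proof is correct and follows the paper's argument: the block identity $T^\top E^\top QT=\operatorname{blkdiag}(M_{dd},0)$ gives $|x|_E^2=x_d^\top M_{dd}x_d$, and the Rayleigh-quotient bound with $M_{dd}>0$ finishes it. You additionally verify the block structure, namely that $ET(0,x_a^\top)^\top=0$ together with the symmetry of $E^\top Q$ kills the off-diagonal and $(2,2)$ blocks, which the paper only asserts after \eqref{eq:kronecker}; you also correctly read $\R^m$ as a typo for $\R^n$.
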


\begin{proof}
    Since $T^\top E^\top QT = \operatorname{blkdiag}(M_{dd},0)$ by \eqref{eq:kronecker}, we have $|x|_E^2 = x_d^\top M_{dd}x_d \ge \lambda_{\min}(M_{dd})\|x_d\|_2^2$. Positivity $M_{dd}>0$ (Ass~\ref{ass:DAE}.\ref{ass-DAE-item:mass}) gives \eqref{eq:diff-control} with $c=\lambda_{\min}(M_{dd})^{-1/2}$.
\end{proof}

While $|\cdot|_E$ is blind to the algebraic component, the index-1 constraint recovers $x_a$ up to algebraic error residual:

\begin{proposition}[Semi-norm control of the error]\label{prop:error-control}
Given the pH model~\eqref{eq:nonlinear-pHDAE} (Def.~\ref{def:pH-class}) and Ass.~\ref{ass:DAE}, let $k\in\{1,2\}$. Suppose $\varepsilon^{\ell+1}:=x(t^{\ell+1})-x^{\ell+1}$ satisfies
\begin{equation}\label{eq:algebraic-error-relation}
    \tilde A_{21}\varepsilon_d^{\ell+1} + \tilde A_{22}\varepsilon_a^{\ell+1} = \rho_k^{\ell+1}, \qquad \|\rho_k^{\ell+1}\|_2 \le C_\rho\,\delta t^k.
\end{equation}
Then there exist $C_1,C_2>0$, independent of $\ell$, $\delta t$, and $k$, such that
\begin{equation}\label{eq:error-control}
    \|\varepsilon^{\ell+1}\|_2+\|\varepsilon^{\ell+1}\|_Q \le C_1|\varepsilon^{\ell+1}|_E+C_2\delta t^k.
\end{equation}
\end{proposition}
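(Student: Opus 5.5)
We work in the index-one coordinates \eqref{eq:kronecker}, writing $T^{-1}\varepsilon^{\ell+1}=((\varepsilon_d^{\ell+1})^\top,(\varepsilon_a^{\ell+1})^\top)^\top$. The plan is to bound both components by $|\varepsilon^{\ell+1}|_E$ plus an $\O(\delta t^k)$ remainder. We then transfer this bound to $\|\cdot\|_2$ and $\|\cdot\|_Q$ through the fixed matrix $T$.

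The differential component is handled directly by Lemma~\ref{lem:diff-control}, which gives $\|\varepsilon_d^{\ell+1}\|_2\le c\,|\varepsilon^{\ell+1}|_E$ with $c=\lambda_{\min}(M_{dd})^{-1/2}$.

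For the algebraic component we use the hypothesis \eqref{eq:algebraic-error-relation}. Since $\tilde A_{22}$ is nonsingular by the index-one structure, we solve
\[
\varepsilon_a^{\ell+1}=\tilde A_{22}^{-1}\bigl(\rho_k^{\ell+1}-\tilde A_{21}\varepsilon_d^{\ell+1}\bigr).
\]
This yields
\[
\|\varepsilon_a^{\ell+1}\|_2\le \|\tilde A_{22}^{-1}\|\,\|\tilde A_{21}\|\,c\,|\varepsilon^{\ell+1}|_E+\|\tilde A_{22}^{-1}\|\,C_\rho\,\delta t^k.
\]
All constants here come only from $S$, $T$, $E$, $A$, $Q$ and $C_\rho$, so they are independent of $\ell$, $\delta t$ and $k$.

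We then combine the two bounds. From $\varepsilon^{\ell+1}=T(\varepsilon_d^{\ell+1},\varepsilon_a^{\ell+1})$ we get
\[
\|\varepsilon^{\ell+1}\|_2\le\|T\|\bigl(\|\varepsilon_d^{\ell+1}\|_2+\|\varepsilon_a^{\ell+1}\|_2\bigr).
\]
For the $Q$-part we use $\|v\|_Q\le\|Q\|^{1/2}\|v\|_2$. Adding these gives \eqref{eq:error-control} with
\[
C_1=(1+\|Q\|^{1/2})\,\|T\|\,c\,\bigl(1+\|\tilde A_{22}^{-1}\|\,\|\tilde A_{21}\|\bigr),\qquad C_2=(1+\|Q\|^{1/2})\,\|T\|\,\|\tilde A_{22}^{-1}\|\,C_\rho .
\]

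There is no real obstacle, since the argument is linear algebra once \eqref{eq:algebraic-error-relation} is granted. The only point needing care is that $C_1$ and $C_2$ must not depend on $k$ or $\delta t$. This holds provided $C_\rho$ is understood as uniform in $k\in\{1,2\}$, which we take as part of the hypothesis.
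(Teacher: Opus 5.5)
Your proposal is correct and follows the paper's proof essentially line by line. You use Lemma~\ref{lem:diff-control} for $\varepsilon_d^{\ell+1}$, invert $\tilde A_{22}$ in \eqref{eq:algebraic-error-relation} for $\varepsilon_a^{\ell+1}$, and transfer the bound to $\|\cdot\|_2$ and $\|\cdot\|_Q$ via $\|T\|$ and $\|Q^{1/2}\|=\|Q\|^{1/2}$; your explicit $C_1,C_2$ are exactly what the paper's ``collecting constants'' step produces.
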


\begin{proof}
In coordinates $T^{-1}\varepsilon^{\ell+1} = ((\varepsilon_d^{\ell+1})^\top, (\varepsilon_a^{\ell+1})^\top)^\top$, Lemma~\ref{lem:diff-control} yields $\|\varepsilon_d^{\ell+1}\|_2 \le c|\varepsilon^{\ell+1}|_E$. Nonsingularity of $\tilde A_{22}$ and \eqref{eq:algebraic-error-relation} then give $\|\varepsilon_a^{\ell+1}\|_2 \le \|\tilde A_{22}^{-1}\|\bigl(\|\tilde A_{21}\|c|\varepsilon^{\ell+1}|_E + C_\rho\delta t^k\bigr)$. Since $\|\varepsilon^{\ell+1}\|_2 \le \|T\|(\|\varepsilon_d^{\ell+1}\|_2+\|\varepsilon_a^{\ell+1}\|_2)$ and $\|\varepsilon^{\ell+1}\|_Q \le \|Q^{1/2}\|\|\varepsilon^{\ell+1}\|_2$, collecting constants proves \eqref{eq:error-control}.
\end{proof}

Proposition~\ref{prop:error-control} closes the gap created by the degeneracy of $E^\top Q$: control of the differential component in $|\cdot|_E$, together with the algebraic error residual, yields control of the full state error. What remains is to ensure that all nonlinear evaluations stay in a region where the required bounds are uniform.

\subsection{Confinement without coercivity}
\label{sec:confinement}

Without coercivity of $H$, boundedness of the Hamiltonian does not
provide a bounded state domain \emph{a priori}. We therefore establish
confinement by a bootstrap argument around the exact trajectory using
the sets of Definition~\ref{def:confinement-sets}. Since
$x\in C^2([0,T])$, the trajectory $\Gamma$ is compact; hence
$\mathcal T_{\rho_0}$ and $\mathcal X_{\mathrm{ext}}$ are compact for
every fixed $\rho_0>0$.

By Ass.~\ref{ass:DAE}.\ref{ass-DAE-item:regularity}, $g$ is $C^1$ on an open
neighborhood of these sets and is therefore Lipschitz on
$\mathcal X_{\mathrm{ext}}$ with a constant $L_{\mathrm{ext}}$
independent of $\ell$, $\delta t$, and $k$. The first step is to show that,
whenever the BDF history states remain in $\mathcal T_{\rho_0}$, the
pre-relaxed state $\bar x^{\ell + 1}$ belongs to a fixed compact set
$\mathcal X_{\mathrm{pre}}$ of Definition~\ref{def:confinement-sets}.

\begin{lemma}[Inductive confinement]\label{lem:inductive-confinement}
    Given the pH model~\eqref{eq:nonlinear-pHDAE} (Def.~\ref{def:pH-class}) and Ass.~\ref{ass:DAE}.
    Let $k\in\{1,2\}$. Assume that the starting values required by the BDF-$k$ history satisfy
    \[
    x^j\in\mathcal{T}_{\rho_0},
    \qquad
    \|\varepsilon^j\|_2\le C(T)\,\delta t^k,
    \qquad j=0,\ldots,k-1,
    \]
    and that, whenever all previous states required by the BDF-$k$ history belong to $\mathcal{T}_{\rho_0}$, the preceding error estimates yield (for step $\ell$)
    \begin{equation}\label{eq:onestep}
        \|\varepsilon^{\ell+1}\|_2\le C(T)\,\delta t^k,
    \end{equation}
    where $C(T)$ is independent of $\ell$, $\delta t$, and $k$. Then, for
    \[
    \delta t\le\delta t_0:=\min\left\{1,\frac{\rho_0}{C(T)}\right\},
    \]
    one has
    \[
    x^\ell\in\mathcal{T}_{\rho_0}
    \qquad\text{and}\qquad
    \|\varepsilon^\ell\|_2\le C(T)\,\delta t^k
    \qquad\text{for all } \; \ell\delta t\le T.
    \]
\end{lemma}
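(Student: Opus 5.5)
The plan is a standard induction (bootstrap) on the time index $\ell$. The hypothesis \eqref{eq:onestep} already supplies the one-step error bound whenever the history lies in the tube, so the only real task is to show that the tube condition propagates. I will prove by strong induction the statement
\[
\mathcal{S}(\ell):\quad x^j\in\mathcal T_{\rho_0}\ \text{ and }\ \|\varepsilon^j\|_2\le C(T)\,\delta t^k\quad\text{for all } 0\le j\le \ell,
\]
for every $\ell$ with $\ell\delta t\le T$.

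\emph{Base case.} The statement $\mathcal S(k-1)$ is exactly the assumption on the starting values $j=0,\dots,k-1$. For $k=2$ this covers the BDF-1 initialization step.

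\emph{Inductive step.} Assume $\mathcal S(\ell)$ holds with $\ell\ge k-1$ and $(\ell+1)\delta t\le T$.
\begin{enumerate}
\item All history states $x^{\ell-j}$, $j=0,\dots,k-1$, lie in $\mathcal T_{\rho_0}$. The standing hypothesis \eqref{eq:onestep} therefore applies. Implicitly this uses Lemma~\ref{lem:pre-relaxation}, which places $\bar x^{\ell+1}\in\mathcal X_{\mathrm{pre}}$, so that the Lipschitz constants entering $C(T)$ are uniform. The hypothesis gives $\|\varepsilon^{\ell+1}\|_2\le C(T)\,\delta t^k$.
\item Since $\delta t\le\delta t_0\le 1$ and $k\ge1$, we have $\delta t^k\le\delta t\le\rho_0/C(T)$. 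Hence $\|x^{\ell+1}-x(t^{\ell+1})\|_2\le\rho_0$.
\item Because $x(t^{\ell+1})\in\Gamma$, it follows that $\operatorname{dist}(x^{\ell+1},\Gamma)\le\rho_0$, i.e.\ $x^{\ell+1}\in\mathcal T_{\rho_0}$.
\end{enumerate}
This establishes $\mathcal S(\ell+1)$, and the induction runs up to the last index with $\ell\delta t\le T$.

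\emph{Main obstacle.} The only delicate point is that the constant $C(T)$ must not depend on $\ell$. Otherwise the threshold $\delta t_0$ would shrink as the induction proceeds and the argument would become circular. The lemma's hypothesis asserts this independence explicitly. In the write-up I will stress that $C(T)$ is determined solely by the fixed compact sets $\mathcal T_{\rho_0}$, $\mathcal X_{\mathrm{ext}}$, $\mathcal X_{\mathrm{pre}}$, by $T$, and by the data, all of which are fixed before $\delta t_0$ is chosen. I will also note that the inequality $\delta t^k\le\delta t$ requires $\delta t\le1$; this is why $1$ appears in the definition of $\delta t_0$.
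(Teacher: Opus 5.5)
Your proposal is correct and follows essentially the same induction as the paper. The starting values give the base case, and hypothesis \eqref{eq:onestep} gives the error bound at step $\ell+1$. The tube membership then follows from $\operatorname{dist}(x^{\ell+1},\Gamma)\le\|\varepsilon^{\ell+1}\|_2\le C(T)\,\delta t^k\le C(T)\,\delta t_0\le\rho_0$, using $\delta t\le 1$, which closes the induction.
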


\begin{proof}
    We proceed by induction. By assumption, starting values satisfy $x^j\in\mathcal{T}_{\rho_0}$ and $\|\varepsilon^j\|_2\le C(T)\delta t^k$ for $j=0,\ldots,k-1$. Assuming previous required BDF-$k$ states lie in $\mathcal{T}_{\rho_0}$, \eqref{eq:onestep} yields $\|\varepsilon^{\ell + 1}\|_2\le C(T)\delta t^k$. Since $x(t^{\ell + 1})\in\Gamma$,
    \[
        \operatorname{dist}(x^{\ell + 1},\Gamma) \le \|x^{\ell + 1}-x(t^{\ell + 1})\|_2 = \|\varepsilon^{\ell + 1}\|_2 \le C(T)\delta t^k \le C(T)\delta t_0 \le \rho_0
    \]
    for $k\in\{1,2\}$ and $\delta t\le\delta t_0\le1$. Thus $x^{\ell + 1}\in\mathcal{T}_{\rho_0}$, closing the induction.
\end{proof}

\subsection{Error analysis} \label{sec:error-analysis}

We now combine the previous ingredients into the global error estimate. The main difficulty is that the BDF energy identity (Lemma~\ref{lem:bdf-energy}, below) controls only the differential part of the error, while the explicit nonlinear term and the EOP relaxation couple differential, algebraic, and auxiliary variables. The proof therefore propagates these quantities simultaneously through a single induction.

\begin{lemma}[Port-power bound]\label{lem:port-power}
Given the pH model~\eqref{eq:nonlinear-pHDAE} (Def.~\ref{def:pH-class}) and Ass.~\ref{ass:DAE}, let $k\in\{1,2\}$ and suppose $\bar x^{\ell+1-j}\in\mathcal X_{\mathrm{pre}}$ for $j=0,\ldots,k-1$. Then there exists $P_*>0$, independent of $\ell$, $\delta t$, and $k$, such that
\begin{equation}\label{eq:port-power-bound}
    |\mathcal P_k^{\ell+1}|\le P_*.
\end{equation}
\end{lemma}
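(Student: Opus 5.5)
The bound follows from compactness and continuity, so the plan is simply to control the two factors in the pairing defining $\mathcal P_k^{\ell+1}$ separately. The first factor is the step-averaged input. By Def.~\ref{def:pH-class}.\ref{def-item:input}, $u\in C^2([0,T],\R^m)$, so $U_*:=\max_{t\in[0,T]}\|u(t)\|_2<\infty$. Hence
\[
\|\bar u^{j}\|_2\le\frac{1}{\delta t}\int_{t^{j-1}}^{t^{j}}\|u(s)\|_2\,\diff s\le U_*
\]
for every $j$ with $t^{j}\le T$. In particular this holds for $j=\ell+1$, which is the only input average appearing in $\mathcal P_1^{\ell+1}$ and $\mathcal P_2^{\ell+1}$.

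The second factor is the output. Here I use that $\mathcal X_{\mathrm{pre}}=\mathcal X_{\mathrm{pre}}(C_d,C_a)$ is compact: it is the image under the fixed linear map $T$ of a product of closed balls, and by Lemma~\ref{lem:pre-relaxation} the constants $C_d,C_a$ do not depend on $\ell$, $\delta t$ or $k$. The effort map is $e(x)=Qx+e_{\mathrm{nl}}(x)$, and $e_{\mathrm{nl}}$ is $C^1$, hence continuous, on a neighborhood of the relevant compact sets by Ass.~\ref{ass:DAE}.\ref{ass-DAE-item:regularity}. Therefore $e$ is bounded on $\mathcal X_{\mathrm{pre}}$, and we may set
\[
Y_*:=\|B\|\max_{x\in\mathcal X_{\mathrm{pre}}}\|e(x)\|_2<\infty .
\]
By hypothesis $\bar x^{\ell+1-j}\in\mathcal X_{\mathrm{pre}}$ for $j=0,\dots,k-1$, so $\|y^{\ell+1}\|_2\le Y_*$, and for $k=2$ also $\|y^{\ell}\|_2\le Y_*$. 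Consequently $\bigl\|\tfrac12(y^{\ell+1}+y^\ell)\bigr\|_2\le Y_*$.

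Cauchy--Schwarz then gives $|\mathcal P_k^{\ell+1}|\le U_*Y_*=:P_*$ in both cases $k=1$ and $k=2$. Neither $U_*$ nor $Y_*$ depends on $\ell$, $\delta t$ or $k$, so $P_*$ is uniform as claimed.

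The one point that needs care is the regularity domain of $e_{\mathrm{nl}}$. Ass.~\ref{ass:DAE}.\ref{ass-DAE-item:regularity} speaks only of ``the relevant compact sets'', and I read this as including $\mathcal X_{\mathrm{pre}}$; otherwise one would use merely that $e$ is continuous on $\R^n$, which is implicit in Def.~\ref{def:pH-class}. A further point concerns the start of BDF-2. There $y^0=B^\top e(\bar x^0)$ with $\bar x^0=x^0\in\Gamma$, and $\Gamma\subset\mathcal T_{\rho_0}$; so either $\mathcal X_{\mathrm{pre}}$ is taken to contain $\mathcal T_{\rho_0}$, or $Y_*$ is enlarged by the maximum of $\|B^\top e\|$ over $\mathcal T_{\rho_0}$. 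Either choice keeps the bound independent of $\ell$, $\delta t$ and $k$.
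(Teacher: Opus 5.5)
Your proof is correct and follows the paper's argument: bound $\|\bar u^{\ell+1}\|$ by $\max_{t\in[0,T]}\|u(t)\|$, bound the outputs by the maximum of $\|B^\top e\|$ over the compact set $\mathcal X_{\mathrm{pre}}$, and conclude with Cauchy--Schwarz. Your caveat about $y^0$ is unnecessary: the first BDF-2 step is taken at $\ell=1$ after the BDF-1 start, so $y^0$ never enters $\mathcal P_2^{\ell+1}$, and the lemma's hypothesis already places both $\bar x^{\ell+1}$ and $\bar x^{\ell}$ in $\mathcal X_{\mathrm{pre}}$.
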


\begin{proof}
By input regularity (Def.~\ref{def:pH-class}.\ref{def-item:input}), $U_*:=\max_{t\in[0,T]}\|u(t)\|<\infty$, so $\|\bar u^{\ell+1}\|\le U_*$. Compactness of $\mathcal X_{\mathrm{pre}}$ and continuity of $e$ yield $Y_*:=\max_{z\in\mathcal X_{\mathrm{pre}}}\|B^\top e(z)\|<\infty$, ensuring $\|y^{\ell+1-j}\|\le Y_*$ across all active levels. Cauchy--Schwarz then gives $|\mathcal P_k^{\ell+1}| \le \|\bar u^{\ell+1}\|\max_{0\le j\le k-1}\|y^{\ell+1-j}\| \le U_*Y_* =: P_*$.
\end{proof}

\begin{remark}
Lemma~\ref{lem:port-power} requires only pre-relaxation confinement and therefore holds at level $\ell+1$ prior to establishing consistency of $\xi^{\ell+1}$, state error bounds, or positivity of $(r^{\ell+1},s_k^{\ell+1})$.
\end{remark}

	{Before stating the main convergence result, we provide two auxiliary lemmas.}
\begin{lemma}[BDF-$k$ consistency] \label{lem:bdf-consistency}
    Given the pH model~\eqref{eq:nonlinear-pHDAE} (Def.~\ref{def:pH-class}) and Ass.~\ref{ass:DAE}.
    We consider BDF-$k$ with its quantities given below \eqref{eq:pHDAE-Discretization} and $k\in\{1,2\}$. The exact solution of \eqref{eq:nonlinear-pHDAE} satisfies
    \begin{align}
        \frac{\alpha_k E x(t^{\ell + 1})-A_k[E x(t^\ell)]}{\delta t}+A x(t^{\ell + 1})+g\bigl(G_k[x(t^\ell)]\bigr)
        &=BC_k[\bar u^{\ell + 1}]+\tau_k^{\ell + 1}, \label{eq:continuous-bdfk}\\
        \|\tau_k^{\ell + 1}\|_2&\le C_\tau\delta t^k, \label{eq:bdfk-truncation}
    \end{align}
    where $C_\tau>0$ is independent of $\ell$, $\delta t$, and $k$.
\end{lemma}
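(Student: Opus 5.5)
The plan is to define $\tau_k^{\ell+1}$ by \eqref{eq:continuous-bdfk} and subtract the exact equation \eqref{eq:modified-pHDAE}, evaluated at a suitable time, so that the residual splits into three pieces. These are a BDF difference-quotient error, an extrapolation error in $g$, and an input-averaging error.

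For $k=1$ we take $t^{\ell+1}$, where $\frac{\diff(Ex)}{\diff t}(t^{\ell+1})+Ax(t^{\ell+1})+g(x(t^{\ell+1}))=Bu(t^{\ell+1})$, and write
\[
\tau_k^{\ell+1}=\underbrace{\tfrac{\alpha_kEx(t^{\ell+1})-A_k[Ex(t^\ell)]}{\delta t}-\tfrac{\diff(Ex)}{\diff t}(t^{\ell+1})}_{=:\tau_{\mathrm{B}}}
+\underbrace{g(G_k[x(t^\ell)])-g(x(t^{\ell+1}))}_{=:\tau_{\mathrm{g}}}
+\underbrace{Bu(t^{\ell+1})-BC_k[\bar u^{\ell+1}]}_{=:\tau_{\mathrm{u}}}.
\]
The three pieces are bounded as follows.

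\textbf{BDF term.} Taylor expansion of $Ex\in C^3$ about $t^{\ell+1}$ with integral remainder gives $\|\tau_{\mathrm B}\|\le \tfrac12\delta t\,\|(Ex)''\|_\infty$ for $k=1$. For $k=2$ it gives $\le c\,\delta t^2\|(Ex)'''\|_\infty$, since BDF-2 is exact for quadratics. This is where the assumption $Ex\in C^3$ is used.

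\textbf{Extrapolation term.} For $k=1$, $G_1[x(t^\ell)]-x(t^{\ell+1})=x(t^\ell)-x(t^{\ell+1})=\O(\delta t)$. For $k=2$, $2x(t^\ell)-x(t^{\ell-1})-x(t^{\ell+1})=-\delta t^2x''(\vartheta)$, which is $\O(\delta t^2)$ using $x\in C^2$. Both arguments of $g$ lie in $\mathcal X_{\mathrm{ext}}$: the point $x(t^{\ell+1})\in\Gamma$, and the extrapolant is of the form $2x-z$ with $x,z\in\Gamma$. Hence the Lipschitz constant $L_{\mathrm{ext}}$ of $g$ on $\mathcal X_{\mathrm{ext}}$ gives $\|\tau_{\mathrm g}\|\le L_{\mathrm{ext}}\|x''\|_\infty\delta t^k$.

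\textbf{Input term.} For $k=1$, $\bar u^{\ell+1}=u(t^{\ell+1})+\O(\delta t)$. For $k=2$, expanding $u\in C^2$ about $t^{\ell+1}$ gives $\bar u^{\ell+1}=u(t^{\ell+1})-\tfrac{\delta t}{2}u'+\O(\delta t^2)$ and $\bar u^{\ell}=u(t^{\ell+1})-\tfrac{3\delta t}{2}u'+\O(\delta t^2)$. Then $\tfrac32\bar u^{\ell+1}-\tfrac12\bar u^\ell=u(t^{\ell+1})+(-\tfrac34+\tfrac34)\delta t\,u'+\O(\delta t^2)$. The first-order terms cancel, so $\|\tau_{\mathrm u}\|\le c\|B\|\|u''\|_\infty\delta t^2$.

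Summing the three bounds and taking $C_\tau$ as the maximum over $k\in\{1,2\}$ of the resulting constants gives a constant depending only on norms of the solution, of $u$, on $L_{\mathrm{ext}}$ and on $\|B\|$, hence independent of $\ell$ and $\delta t$.

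I expect the main obstacle to be the input term for BDF-2. One must check that the shifted averages $\bar u^{\ell}$ and $\bar u^{\ell+1}$, which are centered at the midpoints $t^{\ell-1/2}$ and $t^{\ell+1/2}$, combine with weights $\tfrac32,-\tfrac12$ to second order at $t^{\ell+1}$; the expansion above confirms this. There is also a subtlety in the starting step $\ell=1$ for $k=2$, where $\bar u^{1}$ must be defined through the same averaging formula, which is the case here.
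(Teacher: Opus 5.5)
Your proposal is correct and follows the same route as the paper. You split the residual at $t^{\ell+1}$ into the BDF difference-quotient error, the extrapolation error in $g$ (controlled by the Lipschitz constant on $\mathcal X_{\mathrm{ext}}$), and the input-averaging error, using exactly the expansions $\bar u^{\ell+1}=u(t^{\ell+1})-\tfrac{\delta t}{2}u'(t^{\ell+1})+\O(\delta t^2)$ and $\bar u^{\ell}=u(t^{\ell+1})-\tfrac{3\delta t}{2}u'(t^{\ell+1})+\O(\delta t^2)$ that the paper uses. There is only a cosmetic slip: for $k=1$ the extrapolation bound involves $\|x'\|_\infty$ rather than $\|x''\|_\infty$.
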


\begin{proof}
    Taylor expansion about $t^{\ell + 1}$ gives
    \[
        \frac{\alpha_k E x(t^{\ell + 1})-A_k[E x(t^\ell)]}{\delta t}
        =\frac{\diff(Ex)}{\diff t}(t^{\ell + 1})+\O(\delta t^k),
        \qquad
        G_k[x(t^\ell)]=x(t^{\ell + 1})+\O(\delta t^k).
    \]
    Since $u\in C^2([0,T])$, $\bar u^{\ell + 1}=u(t^{\ell + 1})-\frac{\delta t}{2}u'(t^{\ell + 1})+\O(\delta t^2)$ and $\bar u^\ell=u(t^{\ell + 1})-\frac{3\delta t}{2}u'(t^{\ell + 1})+\O(\delta t^2)$, hence
    \[
        C_1[\bar u^{\ell + 1}]=u(t^{\ell + 1})+\O(\delta t),\qquad
        C_2[\bar u^{\ell + 1}]=\tfrac32\bar u^{\ell + 1}-\tfrac12\bar u^\ell
        =u(t^{\ell + 1})+\O(\delta t^2).
    \]
    Thus $C_k[\bar u^{\ell + 1}]=u(t^{\ell + 1})+\O(\delta t^k)$. Substitution into \eqref{eq:expanded-conti-system}, together with local Lipschitz continuity of $g$ on $\mathcal X_{\mathrm{ext}}$, proves \eqref{eq:continuous-bdfk}--\eqref{eq:bdfk-truncation}.
\end{proof}

\begin{lemma}[BDF-$k$ energy identity] \label{lem:bdf-energy}
    Let $b_E(v,w):=\langle Ev,w\rangle_Q=v^\top E^\top Qw$. Then
    \begin{align*}
        b_E(a-b,a)
        &=\tfrac12\bigl(|a|_E^2-|b|_E^2+|a-b|_E^2\bigr), \qquad k=1,\\ %
        b_E\left(\tfrac32a-2b+\tfrac12c,a\right)
        &=\tfrac14\bigl(|a|_E^2+|2a-b|_E^2\bigr)
          -\tfrac14\bigl(|b|_E^2+|2b-c|_E^2\bigr)
          +\tfrac14|a-2b+c|_E^2, \qquad k=2. %
    \end{align*}
\end{lemma}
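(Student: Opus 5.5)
The identities in Lemma~\ref{lem:bdf-energy} are purely algebraic. The plan is to use only that $b_E$ is a symmetric bilinear form on $\R^n$. Symmetry follows from $E^\top Q=(E^\top Q)^\top$ in Def.~\ref{def:pH-class}, since $b_E(v,w)=v^\top E^\top Qw=w^\top (E^\top Q)^\top v=b_E(w,v)$. Consequently $|v|_E^2=b_E(v,v)$, and the polarization-type expansions $|v\pm w|_E^2=|v|_E^2\pm2b_E(v,w)+|w|_E^2$ hold even though $E^\top Q$ is only semidefinite. No positivity is needed, so the identities hold for arbitrary $a,b,c\in\R^n$.

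\textbf{Case $k=1$.} This is the standard identity $2b_E(a-b,a)=|a|_E^2-|b|_E^2+|a-b|_E^2$. To verify it, expand $|b|_E^2=|a-(a-b)|_E^2=|a|_E^2-2b_E(a-b,a)+|a-b|_E^2$ and solve for $b_E(a-b,a)$.

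\textbf{Case $k=2$.} Expand both sides as quadratic forms in $(a,b,c)$ and compare coefficients of the six monomials $|a|_E^2,|b|_E^2,|c|_E^2,b_E(a,b),b_E(a,c),b_E(b,c)$.

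The left-hand side is $\tfrac32|a|_E^2-2b_E(a,b)+\tfrac12b_E(a,c)$.

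On the right-hand side:
\begin{itemize}
\item $\tfrac14(|a|_E^2+|2a-b|_E^2)=\tfrac54|a|_E^2-b_E(a,b)+\tfrac14|b|_E^2$.
\item $-\tfrac14(|b|_E^2+|2b-c|_E^2)=-\tfrac54|b|_E^2+b_E(b,c)-\tfrac14|c|_E^2$.
\item $\tfrac14|a-2b+c|_E^2=\tfrac14|a|_E^2+|b|_E^2+\tfrac14|c|_E^2-b_E(a,b)+\tfrac12b_E(a,c)-b_E(b,c)$.
\end{itemize}

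Summing the three contributions coefficient by coefficient:
\begin{itemize}
\item $|a|_E^2$: $\tfrac54+\tfrac14=\tfrac32$;
\item $|b|_E^2$: $\tfrac14-\tfrac54+1=0$;
\item $|c|_E^2$: $-\tfrac14+\tfrac14=0$;
\item $b_E(a,b)$: $-1-1=-2$;
\item $b_E(a,c)$: $\tfrac12$;
\item $b_E(b,c)$: $1-1=0$.
\end{itemize}
These coincide with the left-hand side, which proves the identity.

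The only delicate point is the symmetry of $b_E$. Without $E^\top Q$ symmetric, the cross terms $b_E(a,b)$ and $b_E(b,a)$ would not combine, and the telescoping $G$-norm structure $|a|_E^2+|2a-b|_E^2$ would fail. Beyond that there is no real obstacle; the remaining work is bookkeeping.
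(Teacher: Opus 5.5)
Your proof is correct and follows the same route as the paper, which only states that both identities follow by direct algebraic expansion of the symmetric bilinear form $b_E$. You carry out that expansion explicitly, deriving symmetry from $E^\top Q=(E^\top Q)^\top$, and all of your coefficient comparisons for $k=1$ and $k=2$ check out.
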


\begin{proof}
    Both identities follow by direct algebraic expansion of the symmetric bilinear form $b_E$.
\end{proof}

The preceding consistency and $G$-stability identities provide the temporal ingredients; the descriptor estimates of Sections~\ref{sec:wellposedness}--~\ref{sec:confinement} supply the missing algebraic control.

\begin{theorem}[Convergence of the projected discrete solution] \label{thm:convergence}
    Given the pH model~\eqref{eq:nonlinear-pHDAE} (Def.~\ref{def:pH-class}) and Ass.~\ref{ass:DAE}. Let $k\in\{1,2\}$, and let $(\bar x^{\ell + 1},\tilde r^{\ell + 1},\xi^{\ell + 1},x^{\ell + 1},r^{\ell + 1})$ be computed by \eqref{eq:pHDAE-Discretization} and \eqref{eq:eop-passive} from consistent initial data $x^0=x(0)$ and $r^0=\H(x^0)$. For $k=2$, we assume that a proper initialization produced starting values satisfying
    \begin{equation} \label{eq:bdf2-start}
        \|\bar x^1-x(t^1)\|_2+\|x^1-x(t^1)\|_2+\|x^1-x(t^1)\|_Q\le C_{\mathrm{st}}\delta t^2,\qquad
        |1-\xi^1|\le C_{\mathrm{st}}\delta t,\qquad
        |r^1-\H(x^1)|\le C_{\mathrm{st}}\delta t^2 .
    \end{equation}
    Then there exists $\delta t_0>0$ such that for all $\delta t\le\delta t_0$
    and for all $\ell\ge1$ with $t_\ell\le T$ holds
    \begin{alignat*}{2}
        \sup_{\ell \,:\,t^\ell\le T}\|x(t^\ell)-x^\ell\|_2+\sup_{\ell \,:\,t^\ell\le T}\|x(t^\ell)-x^\ell\|_Q
        &\le C(T)\delta t^k, %
        & \qquad
        |1-\xi^\ell|&\le C_\xi\delta t, %
        \\
        |r^\ell-\H(x^\ell)|&\le C_Dt^\ell\delta t,
        \\ %
        r^\ell & \ge\tfrac{h_0}{2},
        &\qquad 
        s_k^{\ell + 1}&\ge\tfrac{h_0}{4} %
    \end{alignat*}
    (with $h_0$ from \eqref{eq:r-and-h0}).
    All constants are independent of $\ell$, $\delta t$, and $k$.
\end{theorem}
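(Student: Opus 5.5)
The proof will be a single strong induction over $\ell$, closed by Lemma~\ref{lem:inductive-confinement}. The induction hypothesis at level $\ell$ collects five facts about all previous levels required by the BDF-$k$ history:
\begin{itemize}
\item $x^{j}\in\mathcal T_{\rho_0}$;
\item $\|\varepsilon^{j}\|_2+\|\varepsilon^j\|_Q\le C(T)\delta t^k$;
\item $|1-\xi^{j}|\le C_\xi\delta t$;
\item $|r^{j}-H(x^{j})-C_0|\le C_D t^{j}\delta t$;
\item $r^{j}\ge h_0/2$.
\end{itemize}
Here the auxiliary variable is normalized as $r\approx H+C_0$, as in \eqref{eq:r-and-h0}. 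The base case is the consistent initial data for $k=1$, and \eqref{eq:bdf2-start} for $k=2$.

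Given the hypothesis, Lemma~\ref{lem:pre-relaxation} places $\bar x^{\ell+1}$ in the compact set $\mathcal X_{\mathrm{pre}}$. This makes $H$, $K$, $e$ and $g$ uniformly bounded and Lipschitz there, and Lemma~\ref{lem:port-power} bounds $\mathcal P_k^{\ell+1}$.

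\textbf{Step 1 (pre-relaxed error).} Subtract \eqref{eq:scheme-primal} from \eqref{eq:continuous-bdfk} to get an equation for $\bar\varepsilon^{\ell+1}:=x(t^{\ell+1})-\bar x^{\ell+1}$. The right-hand side is $\tau_k^{\ell+1}+g(G_k[x^\ell])-g(G_k[x(t^\ell)])$. Its Lipschitz part is bounded by $L_{\mathrm{ext}}$ times the history errors, and it involves only old levels.

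Next, test the differential block with $Q\bar\varepsilon^{\ell+1}$ and use \eqref{eq:Q-dissipativity} to drop $\langle A\bar\varepsilon,\bar\varepsilon\rangle_Q\ge0$. Lemma~\ref{lem:bdf-energy} then gives a telescoping $G$-norm inequality in $|\cdot|_E$. The remaining pairing is bounded by Cauchy--Schwarz together with Proposition~\ref{prop:error-control}. The algebraic block supplies the residual relation \eqref{eq:algebraic-error-relation}, because $\tilde A_{22}$ acts on $\bar\varepsilon_a$ linearly with an $\O(\delta t^k)$ residual built from $\tau$ and the Lipschitz defect of $g_a$. This yields $\|\bar\varepsilon^{\ell+1}\|_2\lesssim$ (history errors) $+\,\delta t^k$, accumulated with the discrete Gronwall lemma.

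\textbf{Step 2 (consistency of $\xi$).} From the explicit formula in Remark~\ref{rem:properties},
\[
\tilde r^{\ell+1}-(H(\bar x^{\ell+1})+C_0)=\O(\delta t)\bigl(r^\ell-H(x^\ell)-C_0\bigr)+\bigl[r^\ell-H(x^\ell)-C_0\bigr]+\text{local terms}.
\]
The local terms compare $H(\bar x^{\ell+1})-H(x^\ell)$ with $\delta t(-K+\mathcal P_k)$. By the continuous power balance \eqref{eq:modified-energy-law} and the quadrature property of $\mathcal P_k$, these terms are $\O(\delta t^2)$ plus Lipschitz multiples of state errors, which are $\O(\delta t^{k+1})$. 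Dividing by $H(\bar x^{\ell+1})+C_0\ge h_0$ gives $|1-\xi^{\ell+1}|\le C\delta t$, provided $|r^\ell-H(x^\ell)-C_0|$ stays $\O(\delta t)$ on $[0,T]$. The factor $C_D t^\ell\delta t$ supplies exactly that.

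\textbf{Step 3 (relaxation).} Since $\eta_k^{\ell+1}=1-(1-\xi)^{k+1}$, we have $|1-\eta_k^{\ell+1}|\le (C_\xi\delta t)^{k+1}$. Hence
\[
\|x^{\ell+1}-\bar x^{\ell+1}\|\le C\delta t^{k+1}\,\|\bar x^{\ell+1}\|=\O(\delta t^{k+1}),
\]
so the Step~1 bound transfers to $\varepsilon^{\ell+1}$ with only a harmless $\O(\delta t^{k+1})$ local term, and \eqref{eq:onestep} follows. Lemma~\ref{lem:inductive-confinement} then puts $x^{\ell+1}\in\mathcal T_{\rho_0}$ for $\delta t\le\delta t_0$.

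\textbf{Step 4 (EOP projection).} If the minimum in \eqref{eq:eop-passive} is $H(x^{\ell+1})+C_0$, the energy error vanishes. Otherwise $r^{\ell+1}=s_k^{\ell+1}=r^\ell+\delta t\mathcal P_k^{\ell+1}<H(x^{\ell+1})+C_0$. In that case
\[
0\le H(x^{\ell+1})+C_0-r^{\ell+1}\le \bigl|r^\ell-H(x^\ell)-C_0\bigr|+\bigl|H(x^{\ell+1})-H(x^\ell)-\delta t\mathcal P_k\bigr|.
\]
The last term is $\delta t K\ge0$ plus the local defect, and the sign of $K$ works in our favour. Combining both cases gives $|r^{\ell+1}-H(x^{\ell+1})-C_0|\le C_Dt^\ell\delta t+C\delta t^2\le C_D t^{\ell+1}\delta t$ for $C_D\ge C$.

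Positivity then follows. Since $H+C_0\ge h_0$, we get $r^{\ell+1}\ge h_0-C_DT\delta t\ge h_0/2$, and $s_k^{\ell+1}\ge h_0/2-P_*\delta t\ge h_0/4$ for $\delta t$ small. Taking the smallest of the step-size restrictions gives $\delta t_0$.

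\textbf{Main obstacle.} I expect the main difficulty to be Step~1 combined with Step~2. The constants $C(T)$, $C_\xi$ and $C_D$ must be fixed before the induction so that they do not depend on each other circularly. The order of choices I would try first is:
\begin{itemize}
\item fix $C(T)$ from the Gronwall bound, which uses only the Lipschitz constants on $\mathcal X_{\mathrm{ext}}$ and $\mathcal X_{\mathrm{pre}}$;
\item derive $C_D$ and $C_\xi$ from $C(T)$;
\item shrink $\delta t_0$ until the relaxation contribution $C_\xi^{k+1}\delta t^{k+1}$ is absorbed.
\end{itemize}

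A second delicate point for $k=2$ is that the $G$-norm telescoping must be run on the relaxed iterates $x^\ell$ while the scheme uses $\bar x$. I would handle this by writing $A_k[Ex^\ell]=A_k[E\bar x^\ell]+\O(\delta t^{k+1})$, using Step~3 at previous levels, and treating the difference as an additional truncation term.
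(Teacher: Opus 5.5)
Your overall architecture matches the paper's: a single induction, pre-relaxation confinement via Lemma~\ref{lem:pre-relaxation}, $Q$-weighted testing with the BDF $G$-norm identity and Proposition~\ref{prop:error-control}, the bound $\|x^{\ell+1}-\bar x^{\ell+1}\|_2=\O(\delta t^{k+1})$, and a case split in the EOP projection. However, Step~2 has a genuine gap.

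You estimate the local defect $H(\bar x^{\ell+1})-H(x^\ell)-\delta t\bigl(-K(\bar x^{\ell+1})+\mathcal P_k^{\ell+1}\bigr)$ by comparing with the exact power balance \eqref{eq:modified-energy-law}. Only the $\delta t$-weighted comparisons, of $K$ and $\mathcal P_k^{\ell+1}$ against their exact counterparts, are $\O(\delta t^{k+1})$. The comparison also produces the unweighted terms $H(\bar x^{\ell+1})-H(x(t^{\ell+1}))$ and $H(x^\ell)-H(x(t^\ell))$. These are Lipschitz multiples of state errors and hence only $\O(\delta t^k)$.
\begin{itemize}
\item For $k=1$, your argument bounds the defect only by $\O(\delta t)$ per step. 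Summing over $T/\delta t$ steps gives merely $|D^\ell|=\O(1)$, so $|1-\xi^{\ell+1}|\le C_\xi\delta t$ does not follow.
\item For $k=2$, you do get $\O(\delta t^2)$, but with a constant proportional to $C(T)$.
\end{itemize}

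The missing idea is a discrete chain rule on the numerical states, which is how the paper obtains \eqref{eq:H-increment-unified}.
\begin{enumerate}
\item Taylor-expand using $\nabla H=E^\top e$, so the first-order term is $e(\bar x^{\ell+1})^\top E(\bar x^{\ell+1}-x^\ell)$.
\item Use the scheme to replace $E(\bar x^{\ell+1}-x^\ell)/\delta t$ by $(J-R)(\bar x^{\ell+1})e(\bar x^{\ell+1})+B\bar u^{\ell+1}+\O(\delta t)$. This works because $\chi_k^{\ell+1}$, $G_k[x^\ell]-\bar x^{\ell+1}$ and $C_k[\bar u^{\ell+1}]-\bar u^{\ell+1}$ are all $\O(\delta t)$.
\item Use $J^\top=-J$, so that only $-\delta t K(\bar x^{\ell+1})+\delta t\langle\bar u^{\ell+1},y^{\ell+1}\rangle$ survives up to $\O(\delta t^2)$.
\end{enumerate}
If you prefer to compare with $x(t)$, you must at least exploit the cancellation $\nabla H^\top(\bar\varepsilon^{\ell+1}-\varepsilon^\ell)=e^\top E(\bar\varepsilon^{\ell+1}-\varepsilon^\ell)$ through the error equation.

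The same defect drives your Step~4. There you also need the signed estimate $H(x^{\ell+1})+C_0-r^{\ell+1}\le -D^\ell-\delta tK(\bar x^{\ell+1})+\O(\delta t^2)$, not the absolute values you display, because $|\delta t K|$ is only $\O(\delta t)$.

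Your proposed ordering of constants does not close either. The relaxation perturbation $\|x^{\ell+1}-\bar x^{\ell+1}\|_2\le C_\xi^{k+1}\delta t^{k+1}\|\bar x^{\ell+1}\|_2$ has exactly the order of the per-step truncation contribution $\delta t\|\tau_k^{\ell+1}\|_2\le C_\tau\delta t^{k+1}$. After Gr\"onwall it therefore contributes a term of size $C_\xi^{k+1}\delta t^k$ (times $T$-dependent factors) to the global error, and no restriction on $\delta t$ absorbs it. In the paper's transfer $\mathcal E_k^{\ell+1}\le(1+C\delta t)\bar{\mathcal E}_k^{\ell+1}+C\delta t^{2k+1}$, the constant necessarily contains $C_\xi^{2k+2}$.

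Your own device for $k=2$ shows the same thing; it is a valid alternative to that transfer. Writing $A_k[Ex^\ell]=A_k[E\bar x^\ell]+\O(\delta t^{k+1})$ adds $\O(C_\xi^{k+1}\delta t^k)$ to $\tau_k^{\ell+1}$.

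Hence $C(T)$ must be chosen after $C_\xi$, not before. The bookkeeping is consistent only if $C_D$ and $C_\xi$ are obtained without $C(T)$ entering at leading order, and your state-error route in Step~2 does not provide this.

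For the same reason, in Step~1 you should bound the defect $\delta g_k^{\ell+1}$ and the algebraic residual through the $G$-norm $\mathcal E_k^\ell$ plus a separately carried residual bound. The paper organizes this with the induction quantities (II) and (V), leading to \eqref{eq:intermediate-energy-recurrence}. If you instead bound them through the hypothesis $\|\varepsilon^j\|_2\le C(T)\delta t^k$, the Gr\"onwall constant depends on $C(T)$ itself.
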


\begin{proof}
    Set $\varepsilon^\ell:=x(t^\ell)-x^\ell$, $\bar\varepsilon^{\ell + 1}:=x(t^{\ell + 1})-\bar x^{\ell + 1}$, $D^\ell:=r^\ell-\H(x^\ell)$, $\xi^0:=1$, and, writing $T^{-1}\varepsilon^\ell= ((\varepsilon_d^\ell)^\top,(\varepsilon_a^\ell)^\top)^\top$,
    \[
        \rho_k^\ell:=\tilde A_{21}\varepsilon_d^\ell+\tilde A_{22}\varepsilon_a^\ell.
    \]
    Define
    \begin{align*}
        \mathcal E_1^\ell&:=\tfrac12|\varepsilon^\ell|_E^2, &
        \mathcal E_2^\ell&:=\tfrac14\bigl(|\varepsilon^\ell|_E^2+|2\varepsilon^\ell-\varepsilon^{\ell-1}|_E^2\bigr),\\
        \bar{\mathcal E}_1^{\ell + 1}&:=\tfrac12|\bar\varepsilon^{\ell + 1}|_E^2, &
        \bar{\mathcal E}_2^{\ell + 1}&:=\tfrac14\bigl(|\bar\varepsilon^{\ell + 1}|_E^2+|2\bar\varepsilon^{\ell + 1}-\varepsilon^\ell|_E^2\bigr).
    \end{align*}
    We proceed by induction, carrying: (I)~$x^j\in\mathcal T_{\rho_0}$; (II)~$\mathcal E_k^j\le K_E(T)\delta t^{2k}$ and $\|\varepsilon^j\|_2+\|\varepsilon^j\|_Q\le C_E(T)\delta t^k$; (III)~$|1-\xi^j|\le C_\xi\delta t$; (IV)~$|D^j|\le C_Dt^j\delta t$; and (V)~$\|\rho_k^j\|_2\le C_\rho\delta t^k$. For $k=1$ these hold at $\ell=0$. For $k=2$, \eqref{eq:bdf2-start}, $\varepsilon^0=0$, and boundedness of the fixed matrices give $\mathcal E_2^1=\O(\delta t^4)$, $\|\rho_2^1\|_2=\O(\delta t^2)$, $|D^1|=\O(\delta t^2)=\O(t^1\delta t)$, and $x^1\in\mathcal T_{\rho_0}$ for sufficiently small $\delta t$.

    \paragraph{Pre-relaxation confinement.}
    By (I), Lemma~\ref{lem:pre-relaxation} gives $\bar x^{\ell + 1}\in\mathcal X_{\mathrm{pre}}$, so all required bounds and Lipschitz constants for $e$, $g$, $H$, and $K$ are uniform in $\ell$, $\delta t$, and $k$ on fixed compact sets.

    \paragraph{Intermediate error estimate.}
    Subtracting \eqref{eq:scheme-primal} from \eqref{eq:continuous-bdfk} yields
    \begin{equation} \label{eq:error-equation}
        \frac{\alpha_kE\bar\varepsilon^{\ell + 1}-A_k[E\varepsilon^\ell]}{\delta t}
        +A\bar\varepsilon^{\ell + 1}+\delta g_k^{\ell + 1}=\tau_k^{\ell + 1},
        \qquad
        \delta g_k^{\ell + 1}:=g(G_k[x(t^\ell)])-g(G_k[x^\ell]).
    \end{equation}
    Lipschitz continuity gives $\|\delta g_k^{\ell + 1}\|_2\le C_g\|G_k[\varepsilon^\ell]\|_2$. The algebraic block is
    \[
        \tilde A_{21}\bar\varepsilon_d^{\ell + 1}+\tilde A_{22}\bar\varepsilon_a^{\ell + 1}
        =:\bar\rho_k^{\ell + 1}=(S\tau_k^{\ell + 1})_a-(S\delta g_k^{\ell + 1})_a .
    \]
    Using $|\varepsilon^{\ell-1}|_E\le2|\varepsilon^\ell|_E+|2\varepsilon^\ell-\varepsilon^{\ell-1}|_E$ for $k=2$, Lemma~\ref{lem:diff-control}, nonsingularity of $\tilde A_{22}$, and (V) yield
    \[
        \|G_k[\varepsilon^\ell]\|_2^2\le C\mathcal E_k^\ell+C\delta t^{2k},
        \qquad
        \|\bar\rho_k^{\ell + 1}\|_2^2\le C\mathcal E_k^\ell+C\delta t^{2k},
    \]
    and hence
    \[
        \|\bar\varepsilon^{\ell + 1}\|_2+\|\bar\varepsilon^{\ell + 1}\|_Q
        \le C|\bar\varepsilon^{\ell + 1}|_E+C\|\bar\rho_k^{\ell + 1}\|_2.
    \]
    Testing \eqref{eq:error-equation} with $\bar\varepsilon^{\ell + 1}$ in $\langle\cdot,\cdot\rangle_Q$, multiplying by $\delta t$, and using Lemma~\ref{lem:bdf-energy} gives
    \begin{equation*} 
        \bar{\mathcal E}_k^{\ell + 1}-\mathcal E_k^\ell+\mathcal R_k^{\ell + 1}
        +\delta t\langle A\bar\varepsilon^{\ell + 1},\bar\varepsilon^{\ell + 1}\rangle_Q
        =\delta t\langle\tau_k^{\ell + 1}-\delta g_k^{\ell + 1},\bar\varepsilon^{\ell + 1}\rangle_Q,
    \end{equation*}
    where $\mathcal R_1^{\ell + 1}:=\frac12|\bar\varepsilon^{\ell + 1}-\varepsilon^\ell|_E^2\ge0$, $\mathcal R_2^{\ell + 1}:=\frac14|\bar\varepsilon^{\ell + 1}-2\varepsilon^\ell+\varepsilon^{\ell-1}|_E^2\ge0$, and $\langle A v,v\rangle_Q=v^\top QR_0Qv\ge0$. Moreover,
    \[
    \begin{aligned}
        \delta t\bigl|\langle\tau_k^{\ell + 1}-\delta g_k^{\ell + 1},\bar\varepsilon^{\ell + 1}\rangle_Q\bigr|
        &\le C\delta t\bigl((\mathcal E_k^\ell)^{1/2}+\delta t^k\bigr)
        \bigl(|\bar\varepsilon^{\ell + 1}|_E+\|\bar\rho_k^{\ell + 1}\|_2\bigr)\\
        &\le C\delta t\,\bar{\mathcal E}_k^{\ell + 1}
        +C\delta t\,\mathcal E_k^\ell+C\delta t^{2k+1},
    \end{aligned}
    \]
    by Cauchy--Schwarz and Young's inequality. Therefore, for sufficiently small $\delta t$,
    \begin{equation} \label{eq:intermediate-energy-recurrence}
        \bar{\mathcal E}_k^{\ell + 1}\le(1+C\delta t)\mathcal E_k^\ell+C\delta t^{2k+1}.
    \end{equation}
    By (II), $\bar{\mathcal E}_k^{\ell + 1}=\O(\delta t^{2k})$ and $\|\bar\varepsilon^{\ell + 1}\|_2+\|\bar\varepsilon^{\ell + 1}\|_Q=\O(\delta t^k)$.

    \paragraph{One-step energy consistency.}
    We have $\|\bar x^{\ell + 1}-x^\ell\|_2=\O(\delta t)$. Writing
    \begin{equation*} %
        \frac{\alpha_kE\bar x^{\ell + 1}-A_k[Ex^\ell]}{\delta t}
        =\frac{E(\bar x^{\ell + 1}-x^\ell)}{\delta t}+\chi_k^{\ell + 1},
        \qquad
        \chi_1^{\ell + 1}:=0,\quad
        \chi_2^{\ell + 1}:=\frac{E(\bar x^{\ell + 1}-2x^\ell+x^{\ell-1})}{2\delta t},
    \end{equation*}
    exact second differences and the error estimates give $\|\chi_k^{\ell + 1}\|_2=\O(\delta t)$. Moreover, for $\widehat x^{\ell + 1}:=G_k[x^\ell]$, $\|\widehat x^{\ell + 1}-\bar x^{\ell + 1}\|_2=\O(\delta t)$ and $\|C_k[\bar u^{\ell + 1}]-\bar u^{\ell + 1}\|_2=\O(\delta t)$. Hence \eqref{eq:scheme-primal} and \eqref{eq:A-g-definition} imply
    \[
        \frac{E(\bar x^{\ell + 1}-x^\ell)}{\delta t}
        =\bigl(J(\bar x^{\ell + 1})-R(\bar x^{\ell + 1})\bigr)e(\bar x^{\ell + 1})+B\bar u^{\ell + 1}+\O(\delta t).
    \]
    Since $\nabla H=E^\top e$, Taylor expansion, $J^\top=-J$, and $K(x)=e(x)^\top R(x)e(x)$ give
    \[
        \H(\bar x^{\ell + 1})-\H(x^\ell)
        =-\delta t\,K(\bar x^{\ell + 1})+\delta t\langle\bar u^{\ell + 1},y^{\ell + 1}\rangle+\O(\delta t^2).
    \]
    Furthermore $\mathcal P_k^{\ell + 1}=\langle\bar u^{\ell + 1},y^{\ell + 1}\rangle+\O(\delta t)$, with $y^{\ell + 1}-y^\ell=\O(\delta t)$ for $k=2$, and therefore
    \begin{equation} \label{eq:H-increment-unified}
        \H(\bar x^{\ell + 1})-\H(x^\ell)
        =-\delta t\,K(\bar x^{\ell + 1})+\delta t\,\mathcal P_k^{\ell + 1}+\O(\delta t^2).
    \end{equation}

    \paragraph{Consistency of the relaxation scalar.}
    Set $\widetilde D^{\ell + 1}:=\tilde r^{\ell + 1}-\H(\bar x^{\ell + 1})$. Subtracting \eqref{eq:H-increment-unified} from the scalar update yields
    \[
        \widetilde D^{\ell + 1}-D^\ell
        =-\delta t\,K(\bar x^{\ell + 1})(\xi^{\ell + 1}-1)+\O(\delta t^2).
    \]
    Since $\xi^{\ell + 1}-1=\widetilde D^{\ell + 1}/\H(\bar x^{\ell + 1})$,
    \begin{equation} \label{eq:D-recurrence-explicit}
        \widetilde D^{\ell + 1}
        =\frac{D^\ell+\O(\delta t^2)}{1+\delta t\,\kappa^{\ell + 1}},
        \qquad
        \kappa^{\ell + 1}:=\frac{K(\bar x^{\ell + 1})}{\H(\bar x^{\ell + 1})}\ge0.
    \end{equation}
    By (IV), $|\widetilde D^{\ell + 1}|\le C\delta t$; since $\H(\bar x^{\ell + 1})\ge h_0$,
    \begin{equation} \label{eq:xi-order}
        |1-\xi^{\ell + 1}|\le C_\xi\delta t.
    \end{equation}

    \paragraph{Relaxation and state error.}
    With $l^{\ell + 1}:=x^{\ell + 1}-\bar x^{\ell + 1}=-(1-\xi^{\ell + 1})^{k+1}\bar x^{\ell + 1}$, boundedness of $\bar x^{\ell + 1}$ and \eqref{eq:xi-order} give $\|l^{\ell + 1}\|_2=\O(\delta t^{k+1})$. Expanding $\mathcal E_k^{\ell + 1}$ through $\varepsilon^{\ell + 1}=\bar\varepsilon^{\ell + 1}-l^{\ell + 1}$ and applying Young's inequality gives
    \[
        \mathcal E_k^{\ell + 1}\le(1+C\delta t)\bar{\mathcal E}_k^{\ell + 1}+C\delta t^{2k+1}.
    \]
    Together with \eqref{eq:intermediate-energy-recurrence},
    \[
        \mathcal E_k^{\ell + 1}\le(1+C\delta t)\mathcal E_k^\ell+C\delta t^{2k+1},
    \]
    so discrete Gr\"onwall yields $\mathcal E_k^\ell\le K_E(T)\delta t^{2k}$. Moreover,
    \[
        \rho_k^{\ell + 1}
        =\bar\rho_k^{\ell + 1}-\tilde A_{21}l_d^{\ell + 1}-\tilde A_{22}l_a^{\ell + 1},
        \qquad
        \|\rho_k^{\ell + 1}\|_2\le C_\rho\delta t^k,
    \]
    closing (V). Lemma~\ref{lem:diff-control}, nonsingularity of $\tilde A_{22}$, and the last two bounds give
    \[
        \|\varepsilon^{\ell + 1}\|_2+\|\varepsilon^{\ell + 1}\|_Q\le C_E(T)\delta t^k,
    \]
    closing (II).

    \paragraph{Positivity and projection.}
    By (IV), $r^\ell\ge\H(x^\ell)-C_DT\delta t\ge h_0/2$ for sufficiently small $\delta t$. Lemma~\ref{lem:port-power} then gives
    \[
        s_k^{\ell + 1}=r^\ell+\delta t\,\mathcal P_k^{\ell + 1}
        \ge\tfrac{h_0}{2}-\delta tP_*\ge\tfrac{h_0}{4}>0
    \]
    for $\delta t\le h_0/(4P_*)$, and therefore $0<\tilde r^{\ell + 1}\le s_k^{\ell + 1}$. If $\H(x^{\ell + 1})\le s_k^{\ell + 1}$, then $D^{\ell + 1}=0$; otherwise $r^{\ell + 1}=s_k^{\ell + 1}<\H(x^{\ell + 1})$ and
    \[
        |D^{\ell + 1}|\le|\tilde r^{\ell + 1}-\H(x^{\ell + 1})|
        \le|D^\ell|+C_D\delta t^2,
    \]
    where \eqref{eq:D-recurrence-explicit} and $|\H(x^{\ell + 1})-\H(\bar x^{\ell + 1})|=\O(\delta t^{k+1})$ were used. Hence $|D^{\ell + 1}|\le C_Dt^{\ell + 1}\delta t$, closing (IV), and $r^{\ell + 1}\ge h_0/2$ for sufficiently small $\delta t$.

    \paragraph{Confinement.}
    Finally, $\operatorname{dist}(x^{\ell + 1},\Gamma)\le\|\varepsilon^{\ell + 1}\|_2\le C_E(T)\delta t^k\le\rho_0$, so Lemma~\ref{lem:inductive-confinement} closes (I). Taking $\delta t_0$ as the minimum of the preceding step-size restrictions completes the proof.
\end{proof}

\begin{remark}[Algebraic constraint accuracy]
    Let
    \[
        \mathcal C(x,t):=\bigl[S(A x+g(x)-Bu(t))\bigr]_a
    \]
    denote the algebraic residual of the continuous DAE. Since
    $\mathcal C(x(t),t)=0$ and $\mathcal C(\cdot,t)$ is locally Lipschitz,
    Theorem~\ref{thm:convergence} implies
    \[
        \|\mathcal C(x^\ell,t^\ell)\|_2\le C\delta t^k.
    \]
    Thus the projected state satisfies the nonlinear algebraic constraint
    with the same order as the state approximation. In the special case
    $g_a\equiv0$, additional exact constraint preservation may occur,
    depending on the algebraic forcing and the relaxation step. \hfill $\Box$
\end{remark}

\begin{proposition}[Energy fidelity of the EOP projection] \label{prop:eop-fidelity}
    Under the assumptions of Theorem~\ref{thm:convergence} with $\delta t\le\delta t_0$, the EOP projection $r^{\ell + 1} = \min\{\H(x^{\ell + 1}),s_k^{\ell + 1}\}$ is non-worsening with respect to the shifted %
    Hamiltonian:
    \begin{equation} \label{eq:nonworsening-cited}
        |r^{\ell + 1}-\H(x^{\ell + 1})| \le |\tilde r^{\ell + 1}-\H(x^{\ell + 1})|.
    \end{equation}
    Moreover, whenever the passivity budget is inactive, auxiliary energy tracks 
    $\H$
    exactly:
    \begin{equation} \label{eq:exact-fidelity}
        \H(x^{\ell + 1})\le s_k^{\ell + 1} \quad\Longrightarrow\quad r^{\ell + 1}=\H(x^{\ell + 1}).
    \end{equation}
\end{proposition}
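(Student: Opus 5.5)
The plan is a two-case argument on which entry attains the minimum in \eqref{eq:eop-passive}. The only structural ingredient I need is the ordering $\tilde r^{\ell+1}\le s_k^{\ell+1}$. I will read $\H(x^{\ell+1})$ as the shifted Hamiltonian $H(x^{\ell+1})+C_0$, consistent with \eqref{eq:eop-passive}.

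First I establish the ordering. By Remark~\ref{rem:properties}, the scalar update can be written explicitly as
\[
\tilde r^{\ell+1}=\frac{s_k^{\ell+1}}{1+\delta t\,K(\bar x^{\ell+1})/(H(\bar x^{\ell+1})+C_0)},
\]
since its numerator $r^\ell+\delta t\,\mathcal P_k^{\ell+1}$ is exactly $s_k^{\ell+1}$. The denominator is at least $1$, because $K\ge0$ and $H+C_0>0$. Theorem~\ref{thm:convergence}, for $\delta t\le\delta t_0$, gives $s_k^{\ell+1}\ge h_0/4>0$. Dividing a positive number by a factor $\ge1$ does not increase it, so $0<\tilde r^{\ell+1}\le s_k^{\ell+1}$. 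This is the one step that needs care. Positivity of the budget, which is only available in the small-step regime of Theorem~\ref{thm:convergence}, is essential here: for negative $s_k^{\ell+1}$ the inequality would reverse.

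Next I split into cases. If $\H(x^{\ell+1})\le s_k^{\ell+1}$, then the minimum equals $\H(x^{\ell+1})$. Hence $r^{\ell+1}=\H(x^{\ell+1})$, which is exactly \eqref{eq:exact-fidelity}, and the left-hand side of \eqref{eq:nonworsening-cited} vanishes, so that inequality holds trivially. If instead $\H(x^{\ell+1})>s_k^{\ell+1}$, then $r^{\ell+1}=s_k^{\ell+1}$. Using the ordering from the first step,
\[
|r^{\ell+1}-\H(x^{\ell+1})|=\H(x^{\ell+1})-s_k^{\ell+1}\le \H(x^{\ell+1})-\tilde r^{\ell+1}=|\tilde r^{\ell+1}-\H(x^{\ell+1})|.
\]
The final equality uses $\tilde r^{\ell+1}\le s_k^{\ell+1}<\H(x^{\ell+1})$. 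This proves \eqref{eq:nonworsening-cited} in both cases.

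Geometrically, the projection replaces $\tilde r^{\ell+1}$ by the point of $(-\infty,s_k^{\ell+1}]$ closest to $\H(x^{\ell+1})$. Since $\tilde r^{\ell+1}$ already lies in that interval, the distance cannot increase. No further estimates from the convergence analysis are required.
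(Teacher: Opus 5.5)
Your proof is correct and matches the paper's argument step for step. You first get $0<\tilde r^{\ell+1}\le s_k^{\ell+1}$ from the explicit scalar update together with the budget positivity of Theorem~\ref{thm:convergence}, and then split on which entry attains the minimum in \eqref{eq:eop-passive}, exactly as the paper does. Your closing remark, that $\min\{\H(x^{\ell+1}),s_k^{\ell+1}\}$ is the projection of $\H(x^{\ell+1})$ onto $(-\infty,s_k^{\ell+1}]$, is a clean restatement of the same idea rather than a different route.
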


\begin{proof}
    By Theorem~\ref{thm:convergence}, $s_k^{\ell + 1}>0$ and $\H(\bar x^{\ell + 1})\ge h_0>0$. The scalar update gives $0 < \tilde r^{\ell + 1} = \frac{s_k^{\ell + 1}}{1+\delta t\,K(\bar x^{\ell + 1})/\H(\bar x^{\ell + 1})} \le s_k^{\ell + 1}$. If $\H(x^{\ell + 1})\le s_k^{\ell + 1}$, the projection sets $r^{\ell + 1}=\H(x^{\ell + 1})$, proving \eqref{eq:exact-fidelity} and rendering \eqref{eq:nonworsening-cited} trivial. Otherwise, $0 < \tilde r^{\ell + 1} \le r^{\ell + 1} = s_k^{\ell + 1} < \H(x^{\ell + 1})$, which implies $|r^{\ell + 1}-\H(x^{\ell + 1})| = \H(x^{\ell + 1})-r^{\ell + 1} \le \H(x^{\ell + 1})-\tilde r^{\ell + 1} = |\tilde r^{\ell + 1}-\H(x^{\ell + 1})|$, confirming \eqref{eq:nonworsening-cited}.
\end{proof}

\begin{corollary}[Exact tracking of the Hamiltonian $H$] \label{cor:exact-energy-tracking}
    For the unshifted auxiliary energy $\widetilde H^\ell := r^\ell-C_0$,
    \begin{equation*} %
        H(x^{\ell + 1}) \le s_k^{\ell + 1}-C_0 \quad\Longrightarrow\quad \widetilde H^{\ell + 1} = H(x^{\ell + 1}).
    \end{equation*}
\end{corollary}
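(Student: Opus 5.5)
The corollary follows from the exact-fidelity implication \eqref{eq:exact-fidelity} of Proposition~\ref{prop:eop-fidelity}, once we remove the shift $C_0$. First we fix the notation. The projection \eqref{eq:eop-passive} reads $r^{\ell+1}=\min\{H(x^{\ell+1})+C_0,\,s_k^{\ell+1}\}$. The symbol $\H(x)$ in Proposition~\ref{prop:eop-fidelity} and Theorem~\ref{thm:convergence} denotes the shifted Hamiltonian $H(x)+C_0$, which is consistent with $r(t)=H(x(t))+C_0$ in \eqref{eq:r-and-h0} and $\xi\equiv1$ in the continuous setting. So the hypothesis $H(x^{\ell+1})\le s_k^{\ell+1}-C_0$ is equivalent to $H(x^{\ell+1})+C_0\le s_k^{\ell+1}$, i.e.\ $\H(x^{\ell+1})\le s_k^{\ell+1}$.

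Under this hypothesis the minimum in \eqref{eq:eop-passive} is attained by its first entry, so $r^{\ell+1}=H(x^{\ell+1})+C_0$. Equivalently, we may invoke \eqref{eq:exact-fidelity} directly to get $r^{\ell+1}=\H(x^{\ell+1})$. Subtracting $C_0$ then gives $\widetilde H^{\ell+1}=r^{\ell+1}-C_0=H(x^{\ell+1})$, which is the claim.

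Two remarks. The conclusion needs only the definition of the min-projection, not the step-size restriction $\delta t\le\delta t_0$ or the positivity bounds of Theorem~\ref{thm:convergence}. Those are needed only for the non-worsening part of Proposition~\ref{prop:eop-fidelity}, which we do not use here, so the corollary holds at every step where the budget is inactive. The only possible obstacle is notational: one must read $\H$ as $H+C_0$ consistently, so that the shift in the hypothesis and in $\widetilde H$ matches the one in \eqref{eq:eop-passive}. Once that is checked, the proof is a one-line case evaluation of the minimum.
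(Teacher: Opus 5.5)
Your proof is correct and follows the paper's own argument: rewrite the hypothesis as $H(x^{\ell+1})+C_0\le s_k^{\ell+1}$, conclude $r^{\ell+1}=H(x^{\ell+1})+C_0$, and subtract $C_0$ (the paper gets the middle step by invoking Proposition~\ref{prop:eop-fidelity}, while you also read it directly off the minimum in \eqref{eq:eop-passive}). Your remark that this step needs neither $\delta t\le\delta t_0$ nor the positivity bounds of Theorem~\ref{thm:convergence} is also right, since the exact-fidelity implication is just a case evaluation of that minimum.
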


\begin{proof}
    Since $\H=H+C_0$, the condition $H(x^{\ell + 1})\le s_k^{\ell + 1}-C_0$ is equivalent to $\H(x^{\ell + 1})\le s_k^{\ell + 1}$. Proposition~\ref{prop:eop-fidelity} then yields $r^{\ell + 1} = \H(x^{\ell + 1}) = H(x^{\ell + 1})+C_0$, so $\widetilde H^{\ell + 1} = r^{\ell + 1}-C_0 = H(x^{\ell + 1})$.
\end{proof}

\begin{remark}[Role of budget positivity] \label{rem:nonworsening-general}
    Budget positivity is essential: $s_k^{\ell + 1}>0$ ensures $0 < \tilde r^{\ell + 1}\le s_k^{\ell + 1}$, placing the projected value $r^{\ell + 1}$ between $\tilde r^{\ell + 1}$ and $\H(x^{\ell + 1})$ when the budget is active ($\tilde r^{\ell + 1}\le r^{\ell + 1}=s_k^{\ell + 1}<\H(x^{\ell + 1})$). If $s_k^{\ell + 1}<0$, this ordering need not hold, so the
    non-worsening property is no longer guaranteed. Thus, the positivity
    established in Theorem~\ref{thm:convergence} ensures that the EOP
    correction is energy-non-worsening.
\end{remark}

\begin{corollary}[Boundedness of the auxiliary variables] \label{cor:auxiliary-boundedness}
    Under the assumptions of Theorem~\ref{thm:convergence}, there exists $R_*>0$, independent of $\ell$, $\delta t$, and $k$, such that for all $t^{\ell + 1}\le T$:
    \begin{equation} \label{eq:auxiliary-boundedness}
        \tfrac{h_0}{2} \le r^\ell \le R_*, \qquad 0 < \tilde r^{\ell + 1} \le R_*.
    \end{equation}
\end{corollary}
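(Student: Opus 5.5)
The lower bound $r^\ell\ge h_0/2$ is already part of Theorem~\ref{thm:convergence}, so only the two upper bounds and the positivity of $\tilde r^{\ell+1}$ remain. The idea is to bound $r^\ell$ and $\tilde r^{\ell+1}$ from above by a quantity that is uniform on the compact confinement sets.

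\emph{Bound for $r^{\ell+1}$ (levels $\ell+1\ge1$).} By definition \eqref{eq:eop-passive}, $r^{\ell+1}\le \H(x^{\ell+1})$. By Theorem~\ref{thm:convergence} (property (I) of its induction, closed via Lemma~\ref{lem:inductive-confinement}), $x^{\ell+1}\in\mathcal T_{\rho_0}$ for all $t^{\ell+1}\le T$ and $\delta t\le\delta t_0$. The set $\mathcal T_{\rho_0}$ is compact, and $H$ is continuous there: $H$ is $C^1$ because $\nabla H=E^\top e$ with $e$ continuous on a neighborhood of the relevant compact sets (Ass.~\ref{ass:DAE}.\ref{ass-DAE-item:regularity}). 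Hence
\[
H_*:=\max_{z\in\mathcal T_{\rho_0}\cup\mathcal X_{\mathrm{pre}}}\H(z)<\infty .
\]
This gives $r^{\ell+1}\le H_*$. For $\ell=0$ we have $r^0=\H(x^0)\le H_*$, since $x^0\in\Gamma$. For $k=2$, the starting value $r^1$ is controlled either by the same argument (BDF-1 initialization) or by \eqref{eq:bdf2-start}, which gives $r^1\le \H(x^1)+C_{\mathrm{st}}\delta t^2\le H_*+C_{\mathrm{st}}$.

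\emph{Bound for $\tilde r^{\ell+1}$.} Use the explicit form in Remark~\ref{rem:properties}. Since the denominator is at least $1$ (because $K\ge0$ and $\H>0$), and the budget $s_k^{\ell+1}>0$ by Theorem~\ref{thm:convergence},
\[
0<\tilde r^{\ell+1}\le s_k^{\ell+1}=r^\ell+\delta t\,\mathcal P_k^{\ell+1}\le r^\ell+\delta t_0 P_*.
\]
Here $P_*$ comes from Lemma~\ref{lem:port-power}, which applies because $\bar x^{\ell+1-j}\in\mathcal X_{\mathrm{pre}}$ by Lemma~\ref{lem:pre-relaxation}. Combining with the first bound, set
\[
R_*:=\max\{H_*+C_{\mathrm{st}},\,1\}+\delta t_0P_*,
\]
which dominates both quantities. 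All ingredients are independent of $\ell$, $\delta t$ and $k$, so $R_*$ is as well.

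\emph{Main obstacle.} The main care point is uniformity. The confinement of the histories, and hence of $\bar x^{\ell+1}$ in the fixed compact set $\mathcal X_{\mathrm{pre}}$, must be available at every level up to $T$ before Lemma~\ref{lem:port-power} is applied. This is guaranteed by the induction in Theorem~\ref{thm:convergence}, so the corollary is a direct consequence once $H_*$ is defined on the union of the confinement sets.
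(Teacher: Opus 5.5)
Your proposal is correct and follows essentially the same route as the paper. The projection gives $r^\ell\le\H(x^\ell)\le H_*$ by confinement of $x^\ell$ in the compact set $\mathcal T_{\rho_0}$, and budget positivity with Lemma~\ref{lem:port-power} gives $0<\tilde r^{\ell+1}\le s_k^{\ell+1}=r^\ell+\delta t\,\mathcal P_k^{\ell+1}\le H_*+\delta t_0P_*$. Your separate handling of the starting values and your inclusion of $\mathcal X_{\mathrm{pre}}$ in the maximization set are harmless but unnecessary, since $r^1$ for $k=2$ comes from a projected BDF-1 step and already satisfies $r^1\le\H(x^1)$.
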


\begin{proof}
    The lower bound on $r^\ell$ comes from Theorem~\ref{thm:convergence}. By definition of the projection, $r^\ell\le\H(x^\ell)$. Since $x^\ell\in\mathcal T_{\rho_0}$ is compact, $r^\ell \le H_* := \max_{z\in\mathcal T_{\rho_0}}\H(z) < \infty$. Using Lemma~\ref{lem:port-power} ($|\mathcal P_k^{\ell + 1}|\le P_*$) and Proposition~\ref{prop:eop-fidelity} ($\tilde r^{\ell + 1}\le s_k^{\ell + 1}$), we have $0 < \tilde r^{\ell + 1} \le s_k^{\ell + 1} = r^\ell+\delta t\,\mathcal P_k^{\ell + 1} \le H_*+\delta t_0P_* =: R_*$, proving \eqref{eq:auxiliary-boundedness}.
\end{proof}

The preceding results describe the relation between the auxiliary and
physical energies. Figure~\ref{fig:energy-fidelity} illustrates this
behavior for the nonlinear benchmark~\ref{app:benchmark}. The EOP budget
remains inactive throughout this experiment, so the auxiliary energy
tracks $H(x^\ell)+C_0$ to roundoff accuracy, in agreement with
Corollary~\ref{cor:exact-energy-tracking}.

\begin{figure}[t]
  \centering
  \captionsetup[subfigure]{font=footnotesize,skip=1pt}

  \begin{subfigure}[b]{0.465\textwidth}
    \centering
    \includegraphics[width=\linewidth]{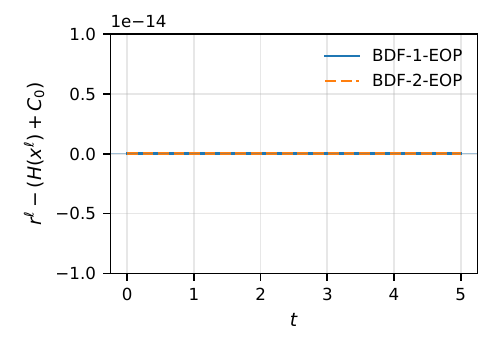}
    \caption{Auxiliary-energy fidelity.}
    \label{fig:auxiliary-energy-fidelity}
  \end{subfigure}
  \hfill
  \begin{subfigure}[b]{0.465\textwidth}
    \centering
    \includegraphics[width=\linewidth]{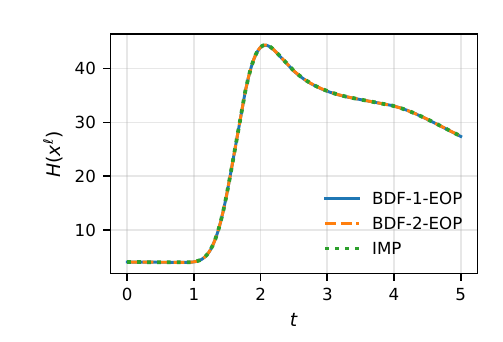}
    \caption{Physical Hamiltonian evolution.}
    \label{fig:hamiltonian-evolution}
  \end{subfigure}

  \caption{Auxiliary and physical energy behavior for the nonlinear benchmark.}
  \label{fig:energy-fidelity}
\end{figure}

\subsection{Discrete passivity} \label{sec:discrete-passivity}

The continuous port-Hamiltonian system satisfies $\frac{\diff}{\diff t}H(x) = -K(x)+\langle u,y\rangle \le \langle u,y\rangle$, making $H$ a storage function. Discretely, the auxiliary energy satisfies a discrete dissipation inequality with respect to $\mathcal P_k^{\ell + 1}$; when the EOP budget is inactive, Proposition~\ref{prop:eop-fidelity} identifies this auxiliary storage with the physical Hamiltonian.

\begin{lemma}[Discrete auxiliary passivity] \label{lem:auxiliary-passivity}
    Let $k\in\{1,2\}$ and let $r^{\ell + 1}$ be computed by \eqref{eq:eop-passive}. Then
    \begin{equation} \label{eq:auxiliary-passivity}
        r^{\ell + 1}-r^\ell \le \delta t\,\mathcal P_k^{\ell + 1}, \qquad\text{or equivalently,}\qquad \widetilde H^{\ell + 1}-\widetilde H^\ell \le \delta t\,\mathcal P_k^{\ell + 1},
    \end{equation}
    where $\widetilde H^\ell:=r^\ell-C_0$. In particular, $r^{\ell + 1}\le r^\ell$ and $\widetilde H^{\ell + 1}\le\widetilde H^\ell$ for unforced systems ($u\equiv0$, $\mathcal P_k^{\ell + 1}=0$).
\end{lemma}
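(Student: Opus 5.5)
The inequality follows directly from the definition of the EOP projection \eqref{eq:eop-passive}, with no reference to the convergence analysis. I will use only the elementary property $\min\{a,b\}\le b$ for real numbers $a,b$, applied with $a=\H(x^{\ell+1})$ and $b=s_k^{\ell+1}$. This gives
\[
r^{\ell+1}=\min\bigl\{\H(x^{\ell+1}),\,s_k^{\ell+1}\bigr\}\le s_k^{\ell+1}=r^\ell+\widehat W_k^{\ell+1}=r^\ell+\delta t\,\mathcal P_k^{\ell+1},
\]
where the last equality is the definition of the discrete port work \eqref{eq:discrete-port-work}. Rearranging yields the first form of \eqref{eq:auxiliary-passivity}. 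The bound holds for every $\ell$ and every $\delta t>0$. It needs no positivity of $s_k^{\ell+1}$ and no smallness of $\delta t$, because the projection is defined as a minimum regardless of sign.

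For the equivalent form, I will subtract the constant shift. Since $\widetilde H^\ell=r^\ell-C_0$ at every level, we have $\widetilde H^{\ell+1}-\widetilde H^\ell=r^{\ell+1}-r^\ell$, so the two inequalities coincide.

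For the unforced case, $u\equiv0$ gives $\bar u^{\ell+1}=0$ (and also $\bar u^\ell=0$ for $k=2$). Then both choices of $\mathcal P_k^{\ell+1}$ vanish, being inner products with $\bar u^{\ell+1}$. Hence $r^{\ell+1}\le r^\ell$, and likewise $\widetilde H^{\ell+1}\le\widetilde H^\ell$.

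No step poses a real obstacle. The only point to check is notational: the statement of Proposition~\ref{prop:eop-fidelity} writes $\H(x^{\ell+1})$ for $H(x^{\ell+1})+C_0$, consistent with \eqref{eq:eop-passive}. Whichever first argument appears in the minimum, the bound by the second argument $s_k^{\ell+1}$ is all that is used.
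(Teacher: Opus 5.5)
Your proposal is correct and is essentially the paper's own proof: bound the minimum in \eqref{eq:eop-passive} by its second argument $s_k^{\ell+1}=r^\ell+\delta t\,\mathcal P_k^{\ell+1}$, then cancel the constant shift $C_0$ to get the equivalent form. Your added observations, that no positivity of $s_k^{\ell+1}$ or step-size restriction is needed and that $\bar u^{\ell+1}=0$ forces $\mathcal P_k^{\ell+1}=0$ in the unforced case, agree with Remark~\ref{rem:auxiliary-physical-passivity} and make the final claim explicit.
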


\begin{proof}
    By the definition of $s_k^{\ell + 1}$ and \eqref{eq:eop-passive}, $r^{\ell + 1} = \min\{\H(x^{\ell + 1}),s_k^{\ell + 1}\} \le s_k^{\ell + 1} = r^\ell+\delta t\,\mathcal P_k^{\ell + 1}$, yielding the first bound in \eqref{eq:auxiliary-passivity}. Since $\widetilde H^\ell=r^\ell-C_0$ with constant $C_0$, $\widetilde H^{\ell + 1}-\widetilde H^\ell = r^{\ell + 1}-r^\ell$, proving equivalence.
\end{proof}

\begin{remark}[Auxiliary versus physical passivity] \label{rem:auxiliary-physical-passivity}
    Inequality \eqref{eq:auxiliary-passivity} is an algebraic consequence of the projection and holds unconditionally without requiring $\tilde r^{\ell + 1}>0$ or $s_k^{\ell + 1}>0$. Budget positivity is needed in Theorem~\ref{thm:convergence} and Proposition~\ref{prop:eop-fidelity} only to establish $\tilde r^{\ell + 1}>0$ and energy non-worsening. Furthermore, \eqref{eq:auxiliary-passivity} governs the auxiliary storage $\widetilde H^\ell$; it implies the physical passivity inequality $H(x^{\ell + 1})-H(x^\ell) \le \delta t\,\mathcal P_k^{\ell + 1}$ on steps where the EOP budget is inactive at both consecutive levels (via Corollary~\ref{cor:exact-energy-tracking}).
\end{remark}

%------------------------------
\section{Numerical results}
\label{sec:numerics-gsav-phdae}

Next, we test the numerical scheme
\eqref{eq:pHDAE-Discretization} with the correction
\eqref{eq:eop-passive}, henceforth denoted BDF-$k$-EOP, $k \in \{1 , 2 \}$, against the
implicit midpoint method (IMP). For IMP, the nonlinear system at each
time step is solved using variants of Newton's method, which we compare.

\subsection{Computational environment}
\label{sec:numerical-setup}

The numerical experiments are conducted primarily on the PLEIADES computing cluster (at the University of Wuppertal). The main numerical campaign uses a dual-socket
compute node equipped with two AMD EPYC 7452 (Rome) processors, with
$64$ physical CPU cores in total. 
This includes convergence, structure-preservation,
Newton-solver, work--precision, nonlinearity, and problem-size scaling
studies for the large-scale benchmark (Section~\ref{sec:large_scale_benchmark}).

All PLEIADES timings are single-core measurements. The process is pinned
to one physical CPU core, simultaneous multithreading is disabled, and
OpenMP and BLAS thread counts are restricted to one. This removes
thread-level parallelism from the comparison and isolates the
single-core computational cost of the time integrators.

The proposed schemes are implemented in Python using NumPy 2.3.2 and
SciPy 1.16.1, without JIT compilation or problem-specific compiled
kernels. Sparse matrix operations and sparse direct factorizations are
provided by SciPy. No method is supplied with method-specific low-level
optimizations, so the comparison reflects the different algebraic and
solver work induced by the time discretizations rather than specialized
kernel implementations.

The low-dimensional nonlinear stress test below is performed on a laptop
equipped with an Intel Core Ultra 7 268V processor with eight physical
cores ($4$ performance and $4$ efficiency cores). The comparison with
SUNDIALS \cite{hindmarsh2005sundials} IDA is also performed on this
machine, since the \texttt{sundials4py} interface is not available on
PLEIADES. For the IDA comparison, all BDF-2-EOP timings are therefore
obtained on the same laptop and in the same software environment. IDA is
accessed through \texttt{sundials4py}~7.8.0 and uses the native SUNDIALS
banded direct linear solver.

All reported wall-clock results are obtained from repeated complete time
integrations, with the median runtime used for comparison. Matrix
assembly, factorizations, and time stepping are included in the reported
solver costs. Solver tolerances, time-step sizes, and numbers of
repetitions are specified for the individual experiments.

The code for the experiments conducted below can be found at 
\begin{equation*}
    \text{https://github.com/cacheFriendlySharma/pH-DAE-EOP-GSAV/tree/main}
\end{equation*}

\subsection{Low-dimensional nonlinear stress testing}
\label{sec:toy-stress}

We first isolate the effect of strongly state-dependent nonlinear operators
in a small index-one problem for which Jacobian reuse becomes difficult.
Let $x=(q,p,z)^\top$ and
\[
E=\text{diag}(1,1,0),\qquad B=(0,1,0)^\top,\qquad
H(x)=\frac12(q^2+p^2)
+\frac{\mu}{\beta}
\left[
\log\!\cosh(\beta q)-\frac12\tanh^2(\beta q)
\right],
\]
with
\[
e(x)=
\begin{bmatrix}
q+\mu\tanh^3(\beta q)&p&z
\end{bmatrix}^{\!\top},
\qquad E^\top e(x)=\nabla H(x).
\]
The interconnection and dissipation operators are
\[
J(x)=
\begin{bmatrix}
0 & \omega+\psi(q) & 0\\
-\omega-\psi(q) & 0 & -c\\
0 & c & 0
\end{bmatrix},
\qquad
R(x)=\text{diag}\!\bigl(\nu+\chi(q),\,\nu+\chi(q),\,\rho\bigr),
\]
where
\[
\psi(q)=\kappa\tanh^3(\beta q),
\qquad
\chi(q)=\gamma\tanh^2(\beta q).
\]
The algebraic equation is $cp-\rho z=0$. Thus the Hamiltonian,
interconnection, and dissipation are nonlinear but bounded, whereas their
derivatives can vary sharply with the state. Moreover, the nonlinear
remainder has zero algebraic component, so BDF-$k$-EOP retains the
constant implicit matrix $\alpha_kE/\delta t+A$.

We set
$\omega=c=\rho=1$, $\nu=\gamma=0.05$, $\mu=0.5$, $\kappa=1$,
and $\beta=32$, with $A_u=2$, $t_0=0.5$, $\tau=2$, and $T=5$.
These parameters were fixed from the continuous reference trajectory
before evaluating any Newton solver. With
\[
f(x):=(J(x)-R(x))e(x),
\qquad
\max_t\|Df(x(t))-Df(0)\|_2\approx27,
\]
the test produces substantial Jacobian variation without unbounded
nonlinear forcing.

At the coarsest tested step, $\delta t=0.1$, full Newton converges for all
steps, whereas modified and simple Newton (with frozen Jacobian) reach the prescribed limit of
$20$ corrections at $t=2.6$. 	BDF-2-EOP completes the integration for all steps, see Table~\ref{tab:toy-robustness}.

\begin{table}[htb]
    \centering
    \footnotesize
    \setlength{\tabcolsep}{4pt}
    \renewcommand{\arraystretch}{0.95}
    \caption{Solver robustness for the nonlinear stress test at
    $\delta t=0.1$; the Newton tolerance is $10^{-10}$ with at most
    $20$ corrections per step.}
    \label{tab:toy-robustness}
    \begin{tabular}{@{}lrrrrrr@{}}
        \toprule
        Method
        & $\varepsilon_x$
        & Time [s]
        & Mean Newton
        & Max Newton
        & Fact.
        & Solves \\
        \midrule
        BDF-2-EOP
        & $3.258{\times}10^{-2}$
        & $0.00255$
        & -- & --
        & $2$
        & $50$ \\
        IMP-full
        & $8.220{\times}10^{-3}$
        & $0.00570$
        & $2.40$
        & $5$
        & $120$
        & $120$ \\
        IMP-modified
        & fail
        & --
        & $3.31$
        & $20$
        & $26$
        & $86$ \\
        IMP-simple
        & fail
        & --
        & $6.08$
        & $20$
        & $1$
        & $158$ \\
        \bottomrule
    \end{tabular}
\end{table}

To quantify this effect, let
\[
M_\ell:=\frac{E}{\delta t}-\frac12Df(x_{\ell+1/2}),
\qquad
M_0:=\frac{E}{\delta t}-\frac12Df(0)
\]
denote the midpoint Jacobian and its globally frozen counterpart.
For $\delta t=0.1$,
\[
\max_\ell \|I-M_0^{-1}M_\ell\|_2=1.48,
\]
so the standard $2$-norm contraction condition for the frozen iteration
is not available. This does not by itself imply divergence. Under
refinement the same quantity decreases to $0.11$ at
$\delta t=6.25\times10^{-3}$, although
$\max_\ell \|Df(x_{\ell +1/2})-Df(0)\|_2$ remains approximately $23$--$27$.
Thus simple Newton recovers as the $E/\delta t$ contribution increasingly
dominates the midpoint Jacobian.

A near-matched state-accuracy comparison is given in
Table~\ref{tab:toy-matched}. BDF-2-EOP requires two factorizations,
compared with $120$ for full Newton.

\begin{table}[htb]
    \centering
    \footnotesize
    \setlength{\tabcolsep}{4pt}
    \renewcommand{\arraystretch}{0.95}
    \caption{Near-matched state accuracy for the nonlinear stress test;
    timings are median laptop runtimes.}
    \label{tab:toy-matched}
    \begin{tabular}{@{}lrrrrr@{}}
        \toprule
        Method
        & $\delta t$
        & $\varepsilon_x$
        & $\varepsilon_H$
        & Time [s]
        & Fact. \\
        \midrule
        BDF-2-EOP
        & $0.05$
        & $8.982{\times}10^{-3}$
        & $7.567{\times}10^{-3}$
        & $0.00464$
        & $2$ \\
        IMP-full
        & $0.10$
        & $8.220{\times}10^{-3}$
        & $4.667{\times}10^{-3}$
        & $0.00570$
        & $120$ \\
        \bottomrule
    \end{tabular}
\end{table}

This example is not intended to establish a universal work--precision
advantage, but to isolate the robustness benefit of a fixed implicit
linear solve when the nonlinear Jacobian varies strongly.

\subsection{Large-scale nonlinear benchmark}\label{sec:large_scale_benchmark}
The preceding example is 
chosen to expose the distinction
	between the fixed BDF-$k$-EOP solve and the fully nonlinear implicit solves required by IMP.
It is comparatively easy to demonstrate this advantage when Jacobian
reuse is difficult. We therefore next consider the large-scale
FPU-$\beta$/Maxwell benchmark Appendix~\ref{app:benchmark}, which is
empirically much more favorable to Newton-type methods. In this problem,
the state dependence is introduced through local spring and damping
terms superimposed on a dominant constant chain and Maxwell structure;
in particular, a globally frozen midpoint Jacobian remains effective
over the complete trajectory. This provides a %deliberately 
more
demanding comparison for BDF-$k$-EOP, since the inexpensive Newton
variants operate close to their most favorable regime.

Unless stated otherwise, all remaining experiments use this nonlinear
FPU-$\beta$/Maxwell benchmark with the fixed parameters listed in
Table~\ref{tab:benchmark-parameters}. The system size $N$, time step
$\delta t$, and nonlinear- or adaptive-solver tolerances are varied only
in the experiments in which they constitute the quantity under study.

\begin{table}[htb]
    \centering
    \footnotesize
    \caption{Fixed parameters of the nonlinear FPU-$\beta$/Maxwell benchmark.}
    \label{tab:benchmark-parameters}
    \begin{tabular}{lll}
        \toprule
        Quantity & Symbol & Value \\
        \midrule
        Heavy mass                     & $m_{\mathrm h}$ & $1$ \\
        Light mass                     & $m_{\mathrm l}$ & $0.5$ \\
        FPU nonlinearity               & $\beta$         & $4$ \\
        Strain-coordinate nonlinearity & $\beta_J$       & $4$ \\
        Nonlinear damping              & $\beta_R$       & $2$ \\
        Maxwell stiffness              & $K_M$           & $2$ \\
        Maxwell viscosity              & $\eta_i$        & $0.4$ \\
        Initial strain amplitude       & $a_0$           & $0.5$ \\
        Auxiliary-energy shift         & $C_0$           & $1$ \\
        Initial-profile wavelength     & $L_0$        & $16$ \\
        Forcing wavelength             & $L_f$        & $16$ \\
        Input amplitude                & $A_u$           & $2$ \\
        Input start time               & $t_0$           & $0.5$ \\
        Input duration                 & $\tau$           & $2$ \\
        Final time                     & $T$              & $5$ \\
        \bottomrule
    \end{tabular}
\end{table}

\subsubsection{Accuracy and structure verification}
\label{sec:numerics-accuracy}
\paragraph{Temporal accuracy.}
For the above presented temporal order study, Fig.~\ref{fig:main1}, the reference solution is computed independently by eliminating the
algebraic variable and integrating the resulting reduced ODE with the
adaptive eighth-order DOP853 (Dormand-Prince) method using
$\mathrm{rtol}=10^{-12}$ and $\mathrm{atol}=10^{-14}$.
The algebraic variable is then reconstructed from the constraint.
Reference accuracy is verified by recomputing the solution with
$\mathrm{rtol}=2\times10^{-13}$ and $\mathrm{atol}=2\times10^{-15}$.
For each error metric, the discrepancy between the two reference solutions is kept below $2\%$ of the minimum finest-step error of BDF-2-EOP and implicit midpoint (IMP).

\paragraph{Structure verification.}
Table~\ref{tab:structure-diagnostics} compares discrete passivity and physical energy balance across BDF-$k$-EOP and IMP. For step $t^\ell\to t^{\ell+1}$ with method-specific discrete port power $P_\ell$ and dissipation $K_\ell$, we define the local and accumulated energy-balance defects
\[
    B_\ell := H(x^{\ell+1})-H(x^\ell) - \delta t\,P_\ell + \delta t\,K_\ell, \qquad D_H^j := \sum_{\ell=0}^{j-1}B_\ell.
\]
We monitor $\max_\ell|B_\ell|$ and $\max_j|D_H^j|$, alongside the auxiliary and physical passivity violations ($[a]_+ := \max\{a,0\}$)
\[
    V_{\mathrm{aux}} := \max_\ell \bigl[r^{\ell+1}-r^\ell-\delta t\,P_\ell\bigr]_+, \qquad V_H := \max_j \left[ H(x^j)-H(x^0) - \delta t\sum_{\ell=0}^{j-1}P_\ell \right]_+,
\]
where $V_{\mathrm{aux}}$ applies solely to the EOP formulations.

\begin{table}[htb]
    \centering
    \footnotesize
    \caption{Structure diagnostics for the dissipative benchmark.}
    \label{tab:structure-diagnostics}
    \begin{tabular}{@{}lrrrrr@{}}
        \toprule
        Method
        & $\delta t$
        & $V_{\mathrm{aux}}$
        & $V_H$
        & $\max_\ell|B_\ell|$
        & $\max_j|D_H^j|$ \\
        \midrule
        BDF-1-EOP
        & $1.0{\times}10^{-2}$ & $0$ & $0$
        & $6.958{\times}10^{-3}$ & $5.265{\times}10^{-1}$ \\
        & $5.0{\times}10^{-3}$ & $0$ & $0$
        & $1.740{\times}10^{-3}$ & $2.638{\times}10^{-1}$ \\
        & $2.5{\times}10^{-3}$ & $0$ & $0$
        & $4.351{\times}10^{-4}$ & $1.321{\times}10^{-1}$ \\
        \addlinespace
        BDF-2-EOP
        & $1.0{\times}10^{-2}$ & $0$ & $0$
        & $1.231{\times}10^{-3}$ & $7.867{\times}10^{-2}$ \\
        & $5.0{\times}10^{-3}$ & $0$ & $0$
        & $3.162{\times}10^{-4}$ & $4.064{\times}10^{-2}$ \\
        & $2.5{\times}10^{-3}$ & $0$ & $0$
        & $8.014{\times}10^{-5}$ & $2.064{\times}10^{-2}$ \\
        \addlinespace
        IMP
        & $1.0{\times}10^{-2}$ & -- & $0$
        & $6.658{\times}10^{-7}$ & $5.683{\times}10^{-5}$ \\
        & $5.0{\times}10^{-3}$ & -- & $0$
        & $8.323{\times}10^{-8}$ & $1.421{\times}10^{-5}$ \\
        & $2.5{\times}10^{-3}$ & -- & $0$
        & $1.040{\times}10^{-8}$ & $3.552{\times}10^{-6}$ \\
        \bottomrule
    \end{tabular}
\end{table}

As predicted by Lemma~\ref{lem:auxiliary-passivity},
BDF-1-EOP and BDF-2-EOP exhibit zero auxiliary passivity violation at
all tested step sizes. Physical passivity is also observed empirically
for all three methods, although only the auxiliary inequality is
guaranteed by the EOP construction. Under refinement, the EOP balance
defects exhibit approximately second-order local and first-order
cumulative decay, whereas IMP exhibits approximately third-order local
and second-order cumulative decay.

\subsubsection{Time-integrator comparison}
\label{sec:time-comparison}

We first compare the linear and nonlinear work of the second-order
methods. At $N=2048$ and $\delta t=5\times10^{-3}$,
BDF-2-EOP requires one linear solve per step after the two
factorizations associated with the BDF-1-EOP start and BDF-2-EOP update.
The midpoint variants require substantially more linear work because
of their nonlinear iterations; see Table~\ref{tab:solver-work}.

\begin{table}[htb]
\centering
\footnotesize
\setlength{\tabcolsep}{5pt}
\renewcommand{\arraystretch}{0.95}
\caption{Solver work at $N=2048$ and
$\delta t=5\times10^{-3}$ over $1000$ time steps.}
\label{tab:solver-work}
\begin{tabular}{@{}lrrrrr@{}}
\toprule
Method & Mean iter. & Residuals & Jacobians & LU & Solves \\
\midrule
BDF-2-EOP & --   & --   & --   & 2    & 1000 \\
IMP-full      & 2.00 & 3000 & 2000 & 2000 & 2000 \\
IMP-modified  & 2.91 & 3913 & 1000 & 1000 & 2913 \\
IMP-simple    & 5.95 & 6953 & 1    & 1    & 5953 \\
\bottomrule
\end{tabular}
\end{table}

\paragraph{Matched accuracy.}
Fixed-step timings alone do not account for differences in accuracy.
Tables~\ref{tab:wp-state} and~\ref{tab:wp-energy} therefore compare
runtimes at matched global state and Hamiltonian errors. Runtimes are
obtained by log--log interpolation between measured work--precision
points; parenthetical values denote the ratio relative to
BDF-2-EOP.

\begin{table}[htb]
\centering
\footnotesize
\setlength{\tabcolsep}{5pt}
\renewcommand{\arraystretch}{0.95}
\caption{Matched global state accuracy at $N=2048$ ($8189$ unknowns);
runtimes are in seconds.}
\label{tab:wp-state}
\begin{tabular}{@{}ccccc@{}}
\toprule
$\varepsilon_x$
& BDF-2-EOP
& IMP-full
& IMP-modified
& IMP-simple \\
\midrule
$1.0\times10^{-4}$
& $0.859$
& $8.630\;(10.05\times)$
& $4.152\;(4.83\times)$
& $2.735\;(3.18\times)$ \\

$3.0\times10^{-5}$
& $1.550$
& $11.587\;(7.48\times)$
& $7.336\;(4.73\times)$
& $4.182\;(2.70\times)$ \\

$1.0\times10^{-5}$
& $2.681$
& $20.317\;(7.58\times)$
& $12.274\;(4.58\times)$
& $6.266\;(2.34\times)$ \\

$3.0\times10^{-6}$
& $4.842$
& $36.634\;(7.57\times)$
& $21.852\;(4.51\times)$
& $10.075\;(2.08\times)$ \\
\bottomrule
\end{tabular}
\end{table}

\begin{table}[htb]
\centering
\footnotesize
\setlength{\tabcolsep}{5pt}
\renewcommand{\arraystretch}{0.95}
\caption{Matched Hamiltonian accuracy at $N=2048$ ($8189$ unknowns);
runtimes are in seconds.}
\label{tab:wp-energy}
\begin{tabular}{@{}ccccc@{}}
\toprule
$\varepsilon_H$
& BDF-2-EOP
& IMP-full
& IMP-modified
& IMP-simple \\
\midrule
$5.0\times10^{-5}$
& $1.539$
& $9.204\;(5.98\times)$
& $4.789\;(3.11\times)$
& $3.040\;(1.98\times)$ \\

$2.0\times10^{-5}$
& $2.429$
& $11.669\;(4.80\times)$
& $7.383\;(3.04\times)$
& $4.203\;(1.73\times)$ \\

$1.0\times10^{-5}$
& $3.432$
& $16.629\;(4.85\times)$
& $10.215\;(2.98\times)$
& $5.424\;(1.58\times)$ \\

$5.0\times10^{-6}$
& $4.806$
& $23.616\;(4.91\times)$
& $14.159\;(2.95\times)$
& $7.030\;(1.46\times)$ \\

$3.0\times10^{-6}$
& $6.159$
& $30.278\;(4.92\times)$
& $18.100\;(2.94\times)$
& $8.618\;(1.40\times)$ \\
\bottomrule
\end{tabular}
\end{table}

BDF-2-EOP is faster throughout the tested work--precision range.
Its advantage over modified and simple Newton decreases toward tighter
tolerances, where the smaller time steps make the midpoint nonlinear
problems easier. The same trend occurs for both $\varepsilon_x$ and $\varepsilon_H$.

\paragraph{Problem-size scaling.}
We next increase the system size from $253$ to $131\,069$ unknowns at
fixed $\delta t=5\times10^{-3}$. Table~\ref{tab:size-scaling} shows that
BDF-2-EOP remains faster than all midpoint realizations throughout
the tested range, while its advantage over simple Newton increases with
problem size.

\begin{table}[t]
\centering
\footnotesize
\setlength{\tabcolsep}{2.5pt}
\renewcommand{\arraystretch}{0.90}
\caption{Single-core problem-size scaling for the second-order methods
with $\delta t=5\times10^{-3}$ and $T=5$. Times are median wall-clock
times over three runs in seconds; parentheses give runtime ratios
relative to BDF-2-EOP.}
\label{tab:size-scaling}
\begin{tabular}{@{}rrcccc@{}}
\toprule
$N$ & Unknowns
& BDF-2-EOP
& IMP-full
& IMP-modified
& IMP-simple \\
\midrule
64
& 253
& 0.334
& 8.716 (26.11$\times$)
& 4.707 (14.10$\times$)
& 0.928 (2.78$\times$)
\\
128
& 509
& 0.377
& 8.982 (23.83$\times$)
& 4.911 (13.03$\times$)
& 1.106 (2.94$\times$)
\\
256
& 1\,021
& 0.459
& 9.755 (21.23$\times$)
& 5.432 (11.82$\times$)
& 1.473 (3.21$\times$)
\\
512
& 2\,045
& 0.627
& 11.184 (17.85$\times$)
& 6.375 (10.17$\times$)
& 2.180 (3.48$\times$)
\\
1\,024
& 4\,093
& 0.943
& 13.823 (14.66$\times$)
& 8.183 (8.68$\times$)
& 3.520 (3.73$\times$)
\\
2\,048
& 8\,189
& 1.558
& 19.169 (12.30$\times$)
& 11.782 (7.56$\times$)
& 6.251 (4.01$\times$)
\\
4\,096
& 16\,381
& 2.818
& 33.752 (11.98$\times$)
& 19.253 (6.83$\times$)
& 11.624 (4.12$\times$)
\\
8\,192
& 32\,765
& 5.366
& 60.475 (11.27$\times$)
& 39.011 (7.27$\times$)
& 22.690 (4.23$\times$)
\\
16\,384
& 65\,533
& 10.573
& 110.093 (10.41$\times$)
& 72.405 (6.85$\times$)
& 44.803 (4.24$\times$)
\\
32\,768
& 131\,069
& 21.168
& 213.568 (10.09$\times$)
& 133.433 (6.30$\times$)
& 90.801 (4.29$\times$)
\\
\bottomrule
\end{tabular}
\end{table}
For $N\ge8192$, the runtime grows approximately linearly with problem
size. Correspondingly, the normalized cost is nearly constant over the
three largest systems (Table~\ref{tab:normalized-scaling}), indicating a
stable large-problem computational regime for this single-core
experiment.

\begin{table}[htb]
\centering
\footnotesize
\setlength{\tabcolsep}{5pt}
\renewcommand{\arraystretch}{0.95}
\caption{Wall-clock cost per unknown and time step in the large-problem
regime, in $\mathrm{ns}/(\text{unknown}\cdot\text{step})$.}
\label{tab:normalized-scaling}
\begin{tabular}{@{}rrcccc@{}}
\toprule
$N$ & Unknowns
& BDF-2-EOP
& IMP-full
& IMP-modified
& IMP-simple \\
\midrule
8\,192  & 32\,765  & 176.5 & 2041.3 & 1328.1 & 740.0 \\
16\,384 & 65\,533  & 173.8 & 1860.9 & 1223.6 & 742.1 \\
32\,768 & 131\,069 & 172.7 & 1818.4 & 1219.3 & 743.3 \\
\bottomrule
\end{tabular}
\end{table}

\paragraph{Impact of nonlinearities.}
Finally, we vary the nonlinear coefficients according to
\[
(\beta,\beta_J,\beta_R)=\lambda(4,4,2),
\qquad
\lambda\in\{0.25,0.5,1,2\},
\]
with all other parameters fixed. Since $\beta_J$ enters the nonlinear
strain transformation, the initial state is reconstructed for each
$\lambda$ so that the physical initial-strain amplitude $a_0=0.5$
remains unchanged.

All runs use $N=2048$, $\delta t=5\times10^{-3}$, and $T=5$.
Reported times are medians of three single-core runs, and parenthetical
values denote runtime ratios relative to BDF-2-EOP.

\begin{table}[htb]
\centering
\footnotesize
\setlength{\tabcolsep}{2.5pt}
\renewcommand{\arraystretch}{0.86}
\caption{Effect of nonlinear strength on solver cost.}
\label{tab:nonlinearity-strength}
\begin{tabular}{@{}clrrrr@{}}
\toprule
$\lambda$ & Method & Time [s] & Mean it. & Max it. & Solves \\
\midrule
0.25
& BDF-2-EOP     & 1.575                  & --   & -- & 1000 \\
& IMP-full     & 23.194 (14.72$\times$) & 2.00 & 2  & 2000 \\
& IMP-modified & 14.191 (9.01$\times$)  & 2.75 & 3  & 2755 \\
& IMP-simple   & 5.031 (3.19$\times$)   & 4.57 & 5  & 4567 \\
\addlinespace[0.3ex]
0.50
& BDF-2-EOP     & 1.590                  & --   & -- & 1000 \\
& IMP-full     & 23.312 (14.66$\times$) & 2.00 & 2  & 2000 \\
& IMP-modified & 14.202 (8.93$\times$)  & 2.81 & 3  & 2813 \\
& IMP-simple   & 5.680 (3.57$\times$)   & 5.22 & 6  & 5222 \\
\addlinespace[0.3ex]
1.00
& BDF-2-EOP     & 1.607                  & --   & -- & 1000 \\
& IMP-full     & 23.211 (14.44$\times$) & 2.00 & 2  & 2000 \\
& IMP-modified & 14.357 (8.93$\times$)  & 2.91 & 3  & 2913 \\
& IMP-simple   & 6.387 (3.97$\times$)   & 5.95 & 7  & 5953 \\
\addlinespace[0.3ex]
2.00
& BDF-2-EOP     & 1.638                  & --   & -- & 1000 \\
& IMP-full     & 22.906 (13.98$\times$) & 2.00 & 2  & 2000 \\
& IMP-modified & 14.101 (8.61$\times$)  & 3.00 & 3  & 2999 \\
& IMP-simple   & 7.130 (4.35$\times$)   & 6.77 & 8  & 6771 \\
\bottomrule
\end{tabular}
\end{table}

The linear work of BDF-2-EOP is independent of $\lambda$, whereas
the Jacobian-reuse midpoint variants require progressively more
nonlinear iterations as the nonlinear coefficients increase. The effect
is strongest for simple Newton; full Newton remains at two iterations
per step because its Jacobian is updated at every iteration.
\subsubsection{Comparison with SUNDIALS IDA}
\label{sec:ida-comparison}

As a reference against a mature DAE solver, we compare BDF-2-EOP
with SUNDIALS IDA through \texttt{sundials4py}~7.8.0.

IDA is applied directly to the pH-DAE through
\[
F(t,x,\dot{x})=E\dot{x}-f(x)-Bu(t),
\qquad
J_F=c_jE-Df(x),
\]
where $c_j$ is determined internally by IDA. After a local permutation,
the Jacobian has structural half-bandwidth four; we therefore use the
native SUNDIALS banded direct solver. The interface used here did not
expose KLU. Scalar tolerances satisfy
$\mathrm{atol}=10^{-2}\cdot \mathrm{rtol}$.

The default configuration uses adaptive variable-order BDF integration
with $q_{\max}=5$. We additionally restrict IDA to $q_{\max}=2$ while
retaining adaptive step selection. The latter provides an equal-order
diagnostic for assessing the contribution of IDA's higher-order
capability, although differences in adaptivity, variable BDF
coefficients, nonlinear iteration, and linear-system setup remain.
All comparisons use $N=8192$ ($32765$ unknowns) and $T=5$.
BDF-2-EOP uses two reusable factorizations and one linear solve per
time step.

\paragraph{Matched accuracy.}
For each BDF-2-EOP operating point, we select the fastest actually
measured IDA configuration whose error does not exceed the corresponding
BDF-2 error; no interpolation of the IDA data is used. Define
\[
\rho_t:=\frac{t_{\mathrm{BDF2}}}{t_{\mathrm{IDA}}},
\]
so that $\rho_t>1$ favors IDA.

\begin{table}[htb]
\centering
\footnotesize
\setlength{\tabcolsep}{3.5pt}
\renewcommand{\arraystretch}{0.95}
\caption{Measured comparison at matched state accuracy for $N=8192$.}
\label{tab:ida-state}
\begin{tabular}{@{}cccccccc@{}}
\toprule
$\varepsilon_x^{\mathrm{BDF2}}$
& $t_{\mathrm{BDF2}}$ [s]
& \multicolumn{3}{c}{IDA, $q_{\max}=5$}
& \multicolumn{3}{c}{IDA, $q_{\max}=2$} \\
\cmidrule(lr){3-5}\cmidrule(lr){6-8}
& & rtol & $t$ [s] & $\rho_t$
  & rtol & $t$ [s] & $\rho_t$ \\
\midrule
$4.108\times10^{-4}$ & 1.0017
& $3\times10^{-4}$ & 0.8743 & 1.15
& $1\times10^{-4}$ & 1.3689 & 0.73 \\
$1.032\times10^{-4}$ & 1.9972
& $1\times10^{-4}$ & 0.9817 & 2.03
& $1\times10^{-5}$ & 2.2962 & 0.87 \\
$2.585\times10^{-5}$ & 3.9768
& $3\times10^{-5}$ & 1.1338 & 3.51
& $1\times10^{-6}$ & 4.0515 & 0.98 \\
$6.470\times10^{-6}$ & 7.9000
& $3\times10^{-6}$ & 1.4458 & 5.46
& $1\times10^{-7}$ & 7.7887 & 1.01 \\
$1.618\times10^{-6}$ & 15.7001
& $1\times10^{-6}$ & 1.6549 & 9.49
& $1\times10^{-8}$ & 15.4511 & 1.02 \\
\bottomrule
\end{tabular}
\end{table}

Unrestricted IDA becomes increasingly advantageous as the state error is
reduced, with $\rho_t$ increasing from $1.15$ to $9.49$. In contrast,
the $q_{\max}=2$ comparison is nearly balanced: BDF-2-EOP is faster
at the two coarser points, while the three finer comparisons satisfy
$0.98\le\rho_t\le1.02$. At the finest point, BDF-2-EOP takes $4000$
fixed steps, whereas order-two IDA takes $8649$ accepted steps and
$8928$ nonlinear iterations, yet their runtimes are $15.70$\,s and
$15.45$\,s, respectively.

\begin{table}[htb]
\centering
\footnotesize
\setlength{\tabcolsep}{3.5pt}
\renewcommand{\arraystretch}{0.95}
\caption{Measured comparison at matched Hamiltonian accuracy for
$N=8192$.}
\label{tab:ida-H}
\begin{tabular}{@{}cccccccc@{}}
\toprule
$\varepsilon_H^{\mathrm{BDF2}}$
& $t_{\mathrm{BDF2}}$ [s]
& \multicolumn{3}{c}{IDA, $q_{\max}=5$}
& \multicolumn{3}{c}{IDA, $q_{\max}=2$} \\
\cmidrule(lr){3-5}\cmidrule(lr){6-8}
& & rtol & $t$ [s] & $\rho_t$
  & rtol & $t$ [s] & $\rho_t$ \\
\midrule
$6.778\times10^{-4}$ & 1.0017
& $3\times10^{-4}$ & 0.8743 & 1.15
& $3\times10^{-4}$ & 1.1335 & 0.88 \\
$1.699\times10^{-4}$ & 1.9972
& $3\times10^{-4}$ & 0.8743 & 2.28
& $3\times10^{-5}$ & 1.7417 & 1.15 \\
$4.252\times10^{-5}$ & 3.9768
& $1\times10^{-4}$ & 0.9817 & 4.05
& $3\times10^{-6}$ & 3.0020 & 1.32 \\
$1.064\times10^{-5}$ & 7.9000
& $3\times10^{-5}$ & 1.1338 & 6.97
& $3\times10^{-7}$ & 5.7542 & 1.37 \\
$2.659\times10^{-6}$ & 15.7001
& $3\times10^{-6}$ & 1.4458 & 10.86
& $1\times10^{-7}$ & 7.7887 & 2.02 \\
\bottomrule
\end{tabular}
\end{table}

The Hamiltonian comparison gives the same qualitative picture.
Unrestricted IDA reaches $\rho_t=10.86$ at the finest measured
accuracy, whereas the $q_{\max}=2$ comparison remains within a factor
of $2.02$. Taken together, the two diagnostics strongly indicate that
IDA's higher-order capability is the dominant contributor to its
increasing high-accuracy advantage.

\paragraph{Structure.}
Work--precision alone does not capture the structural distinction.
Table~\ref{tab:ida-structure} reports representative diagnostics for
BDF-2-EOP at $\delta t=10^{-2}$ and unrestricted IDA at
$\mathrm{rtol}=10^{-5}$. These operating points are not accuracy
matched, so the physical balance defects are reported as diagnostics
rather than as a comparative accuracy measure.

\begin{table}[t]
\centering
\footnotesize
\setlength{\tabcolsep}{4pt}
\renewcommand{\arraystretch}{0.95}
\caption{Representative structure diagnostics at $N=8192$.}
\label{tab:ida-structure}
\begin{tabular}{@{}lcccccc@{}}
\toprule
Method
& $V_{\mathrm{aux}}$
& $V_H$
& Balance defect
& Alg. defect
& Aux. gap
& Setups \\
\midrule
BDF-2-EOP
& $0$
& $0$
& $1.936\times10^{-2}$
& $2.776\times10^{-16}$
& $0$
& 2 \\
IDA
& --
& $2.170\times10^{-7}$
& $1.405\times10^{-2}$
& $3.242\times10^{-12}$
& --
& 19 \\
\bottomrule
\end{tabular}
\end{table}

%Physical 
Passivity is observed for both methods at these settings.
BDF-2-EOP additionally satisfies the auxiliary-passivity inequality
of Lemma~\ref{lem:auxiliary-passivity} by construction. Moreover, the
EOP budget is inactive throughout this experiment, so the auxiliary
energy tracks $H(x^\ell)+C_0$ to roundoff accuracy in accordance with
Corollary~\ref{cor:exact-energy-tracking}. The algebraic residual is also
at roundoff level, and only two reusable linear-system factorizations are
required, compared with $19$ IDA linear-solver setups.

Overall, unrestricted IDA is the fastest method at high accuracy,
reaching an approximately one-order-of-magnitude advantage at the finest
measured points. Restricting IDA to order two changes the comparison
substantially: state work--precision is essentially equal in the
fine-accuracy regime, while the Hamiltonian comparison remains within a
factor of two. This supports the interpretation that the pronounced
advantage of unrestricted IDA is driven predominantly by its
higher-order adaptive integration. BDF-2-EOP instead provides a
fixed-linear, one-solve-per-step discretization together with a discrete
auxiliary-passivity guarantee.

%-----------------------
\section{Conclusions}

	We have developed an EOP-GSAV framework for nonlinear index-one
	port-Hamiltonian DAEs that separates the state-dependent nonlinear terms
	from a constant implicit core. To this end, we gathered all state dependence
	in a single explicitly evaluated term, which allowed us to show that the
	resulting BDF-1 and BDF-2 schemes require one linear solve per time step with
	reusable factorizations, while retaining order-$k$ convergence and a discrete
	auxiliary-passivity inequality. When the EOP budget is inactive, the auxiliary energy also
	tracks the shifted physical Hamiltonian exactly.

	The numerical experiments illustrate both the robustness and the
	computational implications of this construction. In the nonlinear stress
	test, BDF-2-EOP remains robust where Jacobian-reuse midpoint
	iterations deteriorate or fail. On the large pH-DAE benchmark it
	outperforms the tested implicit-midpoint realizations over the
	matched-accuracy range while avoiding repeated nonlinear solves and
	state-dependent factorizations.
	
	Comparison with SUNDIALS IDA clarifies the scope of this advantage.
	Unrestricted variable-order IDA becomes substantially faster at high
	accuracy, whereas the order-two diagnostic yields comparable state
	work--precision and remains within a small constant factor in the
	Hamiltonian metric. This strongly indicates that higher-order adaptive
	integration is the dominant contributor to IDA's high-accuracy
	advantage. BDF-2-EOP instead provides fixed linear algebra,
	predictable one-solve-per-step cost, and a passivity guarantee built
	directly into the discretization.
	
	These results show that, for the pH-DAE class considered here,
	structure-preserving nonlinear integration need not require repeated
	nonlinear solves. 
    As future work, we plan to develop higher-order and adaptive EOP-GSAV
	schemes, to treat more general port and descriptor structures, and to address
	spatially distributed port-Hamiltonian systems with boundary ports.
	The fixed-operator formulation also motivates further investigation of
	large-scale sparse and iterative linear algebra, where repeated setup
	costs and memory traffic become increasingly important. Moreover, it is expected that the implementation of the suggested GSAV method in (energy-optimal) control of strongly nonlinear (such as thermodynamic) port-Hamiltonian systems~\cite{philipp2024optimal,esterhuizen2026nonlinear} leads to highly-efficient structure-preserving nonlinear optimal control solvers.

\appendix
%--------------------------
%\section{Benchmark}
\section{Nonlinear mass-spring-damper benchmark system} \label{app:benchmark}
%--------------------------
\begin{figure}[htbp]
    \centering
    \begin{tikzpicture}[
        heavy/.style={circle, draw, thick, minimum size=10mm, fill=gray!40},
        light/.style={circle, draw, thick, minimum size=6mm, fill=gray!10},
        fpu_spring/.style={
            decoration={
                zigzag,
                pre length=0.6cm,
                post length=0.6cm,
                segment length=5pt,
                amplitude=4pt
            },
            decorate, thick
        },
        linear_spring/.style={
            decoration={
                zigzag,
                pre length=0.4cm,
                post length=0.4cm,
                segment length=4pt,
                amplitude=3pt
            },
            decorate, thick
        },
        connect/.style={thick},
        massless/.style={circle, fill=black, inner sep=1.5pt},
        input/.style={->, >={Stealth[length=3mm]}, very thick, blue},
        scale=0.8
    ]

        % ---------------------------------------------------------
        % Masses
        % ---------------------------------------------------------
        \node[heavy] (m1) at (0,0) {$m_1$};
        \node[light] (m2) at (4.5,0) {$m_2$};

        \node[heavy] (mN1) at (9.5,0) {$m_{N-1}$};
        \node[light] (mN) at (14,0) {$m_\ell$};

        \node[below=0.1cm of m1] {$q_1$};
        \node[below=0.1cm of m2] {$q_2$};
        \node[below=0.1cm of mN1] {$q_{N-1}$};
        \node[below=0.1cm of mN] {$q_\ell$};

        % ---------------------------------------------------------
        % Distributed forcing
        % ---------------------------------------------------------
        \node[blue] at (7,3.0)
        {$b_i u(t),\qquad b_i=\sin\!\left(\dfrac{2\pi i}{L_f}\right)$};

        \draw[input] (0,2.3) -- (m1.north);
        \draw[input] (4.5,2.3) -- (m2.north);
        \draw[input] (9.5,2.3) -- (mN1.north);
        \draw[input] (14,2.3) -- (mN.north);

        % ---------------------------------------------------------
        % First branch
        % ---------------------------------------------------------
        \draw[connect] (m1) -- (0,1.5);
        \draw[connect] (m1) -- (0,-1.5);
        \draw[connect] (m2) -- (4.5,1.5);
        \draw[connect] (m2) -- (4.5,-1.5);

        % Nonlinear FPU spring
        \draw[fpu_spring] (0,-1.5) -- (4.5,-1.5)
        node[midway, below=0.3cm]
        {$V_\beta(\delta_1)$};

        % Maxwell element
        \node[massless, label=above:{$q_{M,1}$}]
        (qm1) at (2.25,1.5) {};

        \draw[linear_spring] (0,1.5) -- (qm1)
        node[midway, above=0.3cm] {$K_M$};

        \draw[connect] (qm1) -- (2.75,1.5);
        \draw[connect] (2.75,1.8) -- (2.75,1.2);
        \draw[connect] (2.75,1.8) -- (3.75,1.8);
        \draw[connect] (2.75,1.2) -- (3.75,1.2);
        \draw[connect] (3.25,1.7) -- (3.25,1.3);
        \draw[connect] (3.25,1.5) -- (4.5,1.5);
        \node at (3.25,2.1) {$\eta_1$};

        % ---------------------------------------------------------
        % Continuation
        % ---------------------------------------------------------
        \draw[connect, dashed] (m2) -- (6,0);
        \draw[connect, dashed] (8,0) -- (mN1);
        \node at (7,0) {\huge $\cdots$};

        \draw[connect, dashed] (4.5,1.5) -- (6,1.5);
        \draw[connect, dashed] (8,1.5) -- (9.5,1.5);
        \node at (7,1.5) {\huge $\cdots$};

        \draw[connect, dashed] (4.5,-1.5) -- (6,-1.5);
        \draw[connect, dashed] (8,-1.5) -- (9.5,-1.5);
        \node at (7,-1.5) {\huge $\cdots$};

        % ---------------------------------------------------------
        % Last branch
        % ---------------------------------------------------------
        \draw[connect] (mN1) -- (9.5,1.5);
        \draw[connect] (mN1) -- (9.5,-1.5);
        \draw[connect] (mN) -- (14,1.5);
        \draw[connect] (mN) -- (14,-1.5);

        % Nonlinear FPU spring
        \draw[fpu_spring] (9.5,-1.5) -- (14,-1.5)
        node[midway, below=0.3cm]
        {$V_\beta(\delta_{N-1})$};

        % Maxwell element
        \node[massless, label=above:{$q_{M,N-1}$}]
        (qmN1) at (11.75,1.5) {};

        \draw[linear_spring] (9.5,1.5) -- (qmN1)
        node[midway, above=0.3cm] {$K_M$};

        \draw[connect] (qmN1) -- (12.25,1.5);
        \draw[connect] (12.25,1.8) -- (12.25,1.2);
        \draw[connect] (12.25,1.8) -- (13.25,1.8);
        \draw[connect] (12.25,1.2) -- (13.25,1.2);
        \draw[connect] (12.75,1.7) -- (12.75,1.3);
        \draw[connect] (12.75,1.5) -- (14,1.5);
        \node at (12.75,2.1) {$\eta_{N-1}$};

    \end{tikzpicture}

    \caption{
    Topology of the one-dimensional nonlinear mass--spring--damper
    chain with parallel Maxwell elements. The $N$ masses alternate
    between heavy and light values. Each neighboring pair is coupled
    by a nonlinear FPU spring with potential
    $V_\beta(\delta_i)$, where
    $\delta_i=\theta(r_i)$ is the physical strain, and by a Maxwell
    branch consisting of a linear spring of stiffness $K_M$ and a
    dashpot with coefficient $\eta_i$. The massless internal nodes
    $q_{M,i}$ give rise to the index-one algebraic constraints.
    The chain is driven by the distributed input
    $b_i u(t)$ with
    $b_i=\sin(2\pi i/L_f)$.
    }
    \label{fig:nonlinear-Maxwell}
\end{figure}

%\subsection{Nonlinear mass-spring-damper system}

We consider the one-dimensional chain shown in Fig.~\ref{fig:nonlinear-Maxwell}, consisting of $N$ alternating masses with $m_i=m_{\mathrm h}=1$ for odd $i$ and $m_i=m_{\mathrm l}<1$ for even $i$. Adjacent masses are connected by $N-1$ parallel links, each comprising a nonlinear FPU-$\beta$ spring \cite{gallavotti2007fermi} and a Maxwell branch \cite{christensen2013theory} consisting of a linear spring with stiffness $K_M$ in series with a dashpot of viscosity $\eta_i>0$. Let $q\in\R^n$ denote the physical displacements, $v=\dot q$, and $\delta_i=q_{i+1}-q_i$ the physical extension of link $i$. The FPU-$\beta$ potential is
\begin{equation*}
    V_\beta(\delta)=\frac12\delta^2+\frac{\beta}{4}\delta^4,
    \qquad
    V_\beta'(\delta)=\delta+\beta\delta^3,
    \qquad \beta>0.
\end{equation*}
To obtain a genuinely state-dependent power-preserving interconnection, we introduce the monotone generalized strain coordinate
\begin{equation*}
    r_i=\frac{1}{\sqrt{\beta_J}}\sinh\!\left(\sqrt{\beta_J}\,\delta_i\right),
    \qquad
    \delta_i=\vartheta(r_i):=
    \frac{1}{\sqrt{\beta_J}}\operatorname{arsinh}\!\left(\sqrt{\beta_J}\,r_i\right),
\end{equation*}
with the limiting case $r_i=\delta_i$ for $\beta_J=0$. Defining
\begin{equation*}
    \Gamma(r):=\operatorname{diag}\!\left(\sqrt{1+\beta_Jr_i^2}\right)_{i=1}^{N-1},
\end{equation*}
we have $\dot r=\Gamma(r)Dv$, while $\dot\delta=Dv$, so that the nonlinear interconnection results from a nonlinear change of strain coordinates and does not alter the underlying physical kinematics.

The state is
\begin{equation*}
    x=
    \begin{bmatrix}
        p^\top&r^\top&r_M^\top&z^\top
    \end{bmatrix}^\top
    \in\R^{4N-3},
\end{equation*}
where $p=Mv\in\R^n$ denotes the momentum, $r\in\R^{N-1}$ the generalized primary-spring strain, $r_M\in\R^{N-1}$ the Maxwell-spring extension, and $z\in\R^{N-1}$ the velocity of the massless internal Maxwell nodes. Here
\begin{equation*}
    M=\operatorname{diag}(m_1,\ldots,m_\ell),\qquad
    D=E_R-E_L,
\end{equation*}
where $E_L,E_R\in\R^{(N-1)\times N}$ select the left and right endpoint of each link. The Hamiltonian is
\begin{equation*}
    H(x)
    =
    \frac12p^\top M^{-1}p
    +\sum_{i=1}^{N-1}V_\beta(\delta_i)
    +\frac{K_M}{2}\|r_M\|^2,
    \qquad
    \delta=\vartheta(r),
\end{equation*}
and we choose the effort
\begin{equation*}
    e(x)=
    \begin{bmatrix}
        v\\
        \Gamma(r)^{-1}\!\left(\delta+\beta\delta^{\circ3}\right)\\
        K_Mr_M\\
        z
    \end{bmatrix},
    \qquad
    E^\top e(x)=\nabla H(x),
\end{equation*}
where $\delta^{\circ3}$ denotes componentwise cubing. In particular,
\begin{equation*}
    Q=\operatorname{diag}\!\left(M^{-1},I_{N-1},K_MI_{N-1},I_{N-1}\right),
    \qquad
    e(x)=Qx+e_{\mathrm{nl}}(x).
\end{equation*}

The descriptor and interconnection matrices are
\begin{equation*}
    E=
    \begin{bmatrix}
        I_\ell&0&0&0\\
        0&I_{N-1}&0&0\\
        0&0&I_{N-1}&0\\
        0&0&0&0
    \end{bmatrix},
    \qquad
    J(x)=
    \begin{bmatrix}
        0&-D^\top\Gamma(r)&E_L^\top&0\\
        \Gamma(r)D&0&0&0\\
        -E_L&0&0&I_{N-1}\\
        0&0&-I_{N-1}&0
    \end{bmatrix},
\end{equation*}
and hence $J(x)^\top=-J(x)$. The constant Maxwell dissipation is represented by
\begin{equation*}
    R_0=
    \begin{bmatrix}
        E_R^\top\Lambda_\eta E_R&0&0&-E_R^\top\Lambda_\eta\\
        0&0&0&0\\
        0&0&0&0\\
        -\Lambda_\eta E_R&0&0&\Lambda_\eta
    \end{bmatrix},
    \qquad
    \Lambda_\eta=\operatorname{diag}(\eta_1,\ldots,\eta_{N-1}).
\end{equation*}
To additionally exercise state-dependent dissipation, we introduce a cubic relative-velocity damper. With
\begin{equation*}
    w:=Dv,\qquad
    \Lambda_R(x):=\beta_R\operatorname{diag}(w_i^2),
    \qquad \beta_R\ge0,
\end{equation*}
we set
\begin{equation*}
    R_1(x)=
    \begin{bmatrix}
        D^\top\Lambda_R(x)D&0&0&0\\
        0&0&0&0\\
        0&0&0&0\\
        0&0&0&0
    \end{bmatrix},
    \qquad
    R(x)=R_0+R_1(x).
\end{equation*}
Thus $R(x)=R(x)^\top\ge0$ and
\begin{equation*}
    e(x)^\top R(x)e(x)
    =
    (E_Rv-z)^\top\Lambda_\eta(E_Rv-z)
    +\beta_R\sum_{i=1}^{N-1}(Dv)_i^4
    =:K(x)\ge0.
\end{equation*}

The resulting nonlinear pH-DAE is
\begin{equation*}%\label{eq:FPU-Maxwell-pH-DAE}
    E\dot x=(J(x)-R(x))e(x)+Bu,
    \qquad
    y=B^\top e(x),
\end{equation*}
or, equivalently,
\begin{align*}
    \dot p={}&-D^\top\!\left(\delta+\beta\delta^{\circ3}\right)
    +E_L^\top K_Mr_M
    -E_R^\top\Lambda_\eta(E_Rv-z)
    -D^\top\!\left(\beta_R(Dv)^{\circ3}\right)+B_pu,\\
    \dot r={}&\Gamma(r)Dv,\qquad
    \dot r_M=-E_Lv+z,\\
    0={}&-K_Mr_M+\Lambda_\eta(E_Rv-z),
\end{align*}
with $B=[B_p^\top,0,0,0]^\top$. Since $\Lambda_\eta$ is positive definite, the algebraic equation can be solved uniquely as
\begin{equation*}
    z=E_Rv-\Lambda_\eta^{-1}K_Mr_M,
\end{equation*}
and the descriptor system is index one. Moreover,
\begin{equation*}
    \frac{\mathrm d}{\mathrm dt}H(x(t))
    =
    -K(x(t))+u(t)^\top y(t),
\end{equation*}
so the continuous system is passive.

For the constant-core decomposition used by the proposed method, we take
\begin{equation*}
    J_0:=J(0),\qquad
    A:=-(J_0-R_0)Q,
\end{equation*}
and write
\begin{equation*}
    E\dot x+Ax+g(x)=Bu,\qquad
    g(x)=-(J_0-R_0)e_{\mathrm{nl}}(x)-[J(x)-J_0]e(x)+R_1(x)e(x).
\end{equation*}
For this benchmark, the nonlinear algebraic component vanishes
identically, i.e., $g_a\equiv0$.

To avoid increasingly localized dynamics as $N$ grows, the initial strain and external forcing are chosen with a fixed lattice wavelength. Let
\begin{equation*}
    \phi_i=
    \sin\!\left(\frac{2\pi i}{L_0}\right)
    +\frac14\sin\!\left(\frac{4\pi i}{L_0}+\frac{\pi}{5}\right),
    \qquad
    \widehat\phi_i=\frac{\phi_i}{\max_j|\phi_j|},
    \qquad L_0=16,
\end{equation*}
and prescribe
\begin{equation*}
    \delta_i(0)=a_0\widehat\phi_i,\qquad
    r_i(0)=\frac{1}{\sqrt{\beta_J}}
    \sinh\!\left(\sqrt{\beta_J}\,\delta_i(0)\right),
    \qquad
    p(0)=r_M(0)=z(0)=0,
\end{equation*}
for $\beta_J>0$, with $r_i(0)=\delta_i(0)$ for $\beta_J=0$. This gives a spatially extensive nonlinear initial state whose energy density remains non-vanishing as $N$ increases.

For the open-system and scaling experiments, $N$ is chosen as a multiple of $L_f$, and we use the distributed zero-mean body-force profile
\begin{equation*}
    (B_p)_i=\sin\!\left(\frac{2\pi i}{L_f}\right),
    \qquad L_f=16.
\end{equation*}
The system is driven by the globally $C^2$ compactly supported input
\begin{equation*}
    u(t)=
    \begin{cases}
        A_u\sin^4\!\left(\pi\dfrac{t-t_0}{\tau}\right),
        &t_0\le t\le t_0+\tau,\\[1mm]
        0,&\text{otherwise}.
    \end{cases}
\end{equation*}
The fixed spatial wavelengths ensure that increasing $N$ enlarges the number of dynamically active nonlinear components rather than merely appending quiescent degrees of freedom.

\bibliographystyle{plainnat}

\bibliography{references}
\end{document}